\def\RR{\ensuremath{\mathbb R}}
\newcommand{\Rn}{{\RR}^n}
\newcommand{\Rnn}{\RR^{n\times n}}
\newcommand{\spanl}{\mathrm{span}}
\def\fgax{\ensuremath{\mathcal F}_\gamma(x)}
\def\bgax{\ensuremath{\mathcal B}_\gamma(x)}
\newcommand{\lmu}{{\mathcal L}(\mu)}
\newcommand{\lmun}{{\mathcal L}(\mu_0)}
\begin{document}

\pagestyle{myheadings}
\thispagestyle{plain}
\markboth{K. Neymeyr}
{Preconditioned steepest descent}

\title{A geometric convergence theory for the \\preconditioned steepest
descent iteration}

\author{Klaus Neymeyr\thanks{
         Universit\"at Rostock, Institut f\"ur Mathematik, 
         Ulmenstra{\ss}e 69, 18057 Rostock, Germany.
	  }}

\vspace{-1.2in}

\vspace{.9in}
\setcounter{page}{1}
\maketitle

\begin{abstract}
Preconditioned gradient iterations for very large eigenvalue problems 
are efficient solvers with growing popularity.
However, only for the simplest preconditioned eigensolver, namely the preconditioned 
gradient iteration (or preconditioned inverse iteration) with fixed step size, sharp non-asymptotic
convergence estimates are known and these estimates require an ideally scaled preconditioner.
In this paper a new sharp convergence estimate is derived for the {\em preconditioned 
steepest descent iteration} which combines the preconditioned gradient iteration with
the Rayleigh-Ritz procedure for optimal line search convergence acceleration.
The new estimate always improves that of the fixed step size iteration.
The practical importance of this new estimate is that arbitrarily scaled preconditioners can be used. 
The Rayleigh-Ritz procedure implicitly computes the optimal scaling. 
\end{abstract}

\begin{keywords} 
eigenvalue computation; Rayleigh quotient; gradient iteration; 
    steepest descent; preconditioner.
\end{keywords}

\section{Introduction}
The topic of this paper is a convergence analysis of a preconditioned gradient
iteration with optimal step-length scaling in order to compute the 
smallest eigenvalue of the generalized eigenvalue problem 
\begin{align}
   \label{e.mgevp}
   Ax_i=\lambda_i Bx_i
\end{align}
for symmetric positive definite matrices $A,B\in\Rnn$.
A typical source of (\ref{e.mgevp}) is an eigenproblem
for a self-adjoint and elliptic partial differential operator whose
weak form reads
\begin{equation}
   \label{e.weak}
   a(u,v) = \lambda \,(u,v),  \qquad \forall v \in H(\Omega).
\end{equation}
The bilinear form $a(\cdot, \cdot)$ is associated with the partial
differential operator and an $L^2(\Omega)$ inner product 
$(\cdot, \cdot)$ appears on the right side. 
Further $u$ is an eigenfunction and $\lambda$ an eigenvalue  if (\ref{e.weak})
is satisfied for all $v$ in an appropriate Hilbert space $H(\Omega)$. 
A finite element discretization of (\ref{e.weak})
results in (\ref{e.mgevp}). Then $A$ is called the discretization matrix
and $B$ the mass matrix. These matrices are typically sparse and very large.

The eigenvalues of (\ref{e.mgevp}) are enumerated in increasing order
$0<\lambda_1\leq\lambda_2\leq\ldots\leq\lambda_n$.
The smallest eigenvalue $\lambda_1$ and an associated eigenvector 
can be computed by means of an iterative minimization
of the Rayleigh quotient
\begin{align}
  \label{e.rayquo}
  \rho(x)=\frac{(x,Ax)}{(x,Bx)},
\end{align} 
where  $(\cdot,\cdot)$ denotes the Euclidean inner product.
To this end the simplest preconditioned gradient iteration corrects
a current iterate $x$ in the
direction of the negative preconditioned gradient of the Rayleigh quotient
to form the next iterate $x'$
\begin{equation}
  \label{e.preeig}
   x'= x-T(Ax-\rho(x) Bx).
\end{equation}
Therein $T$ is a symmetric positive definite matrix and is called 
the preconditioner.
This fixed-step-length preconditioned iteration is analyzed in
\cite{TEV2000,KNN2003,KNY1998,KNN2009}; see also the references in \cite{BKP1996}.

Appropriate preconditioners $T$ are available in various ways; especially 
for the operator eigenproblem (\ref{e.weak})
multi-grid or multi-level preconditioners are available.
In this context the quality of the preconditioner is typically controlled 
in terms of a real parameter $\gamma\in[0,1)$ in a way that
  \begin{equation}
    \label{e.bass}
    (1-\gamma)(z,T^{-1}z)\leq (z,Az) \leq (1+\gamma)(z,T^{-1}z),
                                   \quad\forall z\in\Rn, 
  \end{equation}
or equivalently, that the spectral radius of the error propagation matrix
$I-TA$ is bounded by $\gamma$.
  
The following result for the convergence of (\ref{e.preeig})
is known from \cite{KNN2003,KNN2009}; the convergence analysis interprets
this preconditioned iteration as a preconditioned inverse iteration and
makes use of the underlying geometry.
\begin{theorem}
  \label{t.pinres}
If $\lambda_i\leq\rho(x)<\lambda_{i+1}$ then for $x'$ given by (\ref{e.preeig})
and assuming (\ref{e.bass}) it holds that $\rho(x')\leq\rho(x)$ and 
either $\rho(x')\leq\lambda_i$ or
  \begin{equation}
    \label{e.rhoest}
    \frac{\rho(x')-\lambda_i}{\lambda_{i+1}-\rho(x')}\leq
    \sigma^2 
    \frac{\rho(x)-\lambda_i}{\lambda_{i+1}-\rho(x)}, \qquad   
    \sigma= \gamma +(1-\gamma)\frac{\lambda_i}{\lambda_{i+1}}.
  \end{equation}
\end{theorem}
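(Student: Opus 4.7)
My plan is to recast the iteration as a perturbed Rayleigh quotient inverse iteration and then reduce the sharp ratio bound to a two-dimensional optimization. First I would simultaneously diagonalize $A$ and $B$ so that $\rho$ becomes the standard Rayleigh quotient for $\Lambda=\mathrm{diag}(\lambda_1,\dots,\lambda_n)$, and introduce the ``exact'' iterate $y:=\rho(x)\,A^{-1}Bx$, which is one Rayleigh quotient inverse iteration step. A direct calculation gives $Ax-\rho(x)Bx = A(x-y)$, hence
\begin{equation*}
   x' - y \;=\; (I - TA)(x-y),
\end{equation*}
turning the problem into one about controlling the Rayleigh quotient of a perturbation of $y$.

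Next I would translate the spectral equivalence (\ref{e.bass}) into a clean metric estimate. Substituting $z=A^{-1/2}w$ in (\ref{e.bass}) shows that the eigenvalues of the symmetric matrix $A^{1/2}TA^{1/2}$ all lie in $[1-\gamma,\,1+\gamma]$, so $\|I-A^{1/2}TA^{1/2}\|_2\le\gamma$. Applying this to $A^{1/2}(x-y)$ yields
\begin{equation*}
   \|x'-y\|_A \;\le\; \gamma\,\|x-y\|_A,
\end{equation*}
where $\|\cdot\|_A$ denotes the energy norm. Thus, as $T$ varies over the admissible preconditioners, $x'$ sweeps out the closed $A$-ball $\mathcal B$ of radius $\gamma\|x-y\|_A$ about $y$, and it suffices to bound $\rho$ uniformly on $\mathcal B$.

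The hard part is a mini-dimensional reduction: I would show that the supremum of $\rho(x')$ over $\mathcal B$ is attained within the two-dimensional invariant subspace $\spanl\{u_i,u_{i+1}\}$ spanned by $B$-orthonormal eigenvectors for $\lambda_i$ and $\lambda_{i+1}$. The strategy is to verify that collapsing the components of $x$ outside $\spanl\{u_i,u_{i+1}\}$ into suitable components inside this subspace preserves $\rho(x)$ and the admissible radius $\gamma\|x-y\|_A$ while only enlarging the worst-case $\rho(x')$. This geometric reduction is the technical core of the analysis and is where most of the work resides; it is the same device used in \cite{KNN2003,KNN2009}.

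Once localized to the $(u_i,u_{i+1})$-plane the problem becomes fully explicit: parameterize $x$ by a single angle, determine $y$ from its formula, and optimize $\rho(x')$ over the boundary circle of $\mathcal B$. The objective is a rational function of one variable, and a direct computation produces the stated factor $\sigma^2$ with $\sigma=\gamma+(1-\gamma)\lambda_i/\lambda_{i+1}$. Configurations for which the optimum already falls at or below $\lambda_i$ account for the alternative $\rho(x')\le\lambda_i$; and since $\sigma<1$, together with the monotonicity of the map $\rho\mapsto(\rho-\lambda_i)/(\lambda_{i+1}-\rho)$ on $[\lambda_i,\lambda_{i+1})$, the ratio bound automatically delivers $\rho(x')\le\rho(x)$.
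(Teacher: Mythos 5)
Your setup is correct and is exactly the geometric framework behind the cited proofs (and behind Section~2 of this paper): the identity $x'-y=(I-TA)(x-y)$ with $y=\rho(x)A^{-1}Bx$, the translation of (\ref{e.bass}) into $\|I-A^{1/2}TA^{1/2}\|\le\gamma$, and the resulting $A$-ball of radius $\gamma\|x-y\|_A$ centered at $y$ are all right. Be aware, however, that the paper does not prove Theorem~\ref{t.pinres} at all; it quotes it from \cite{KNN2003,KNN2009}, so the real comparison is with those references, whose approach you are reconstructing.

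The genuine gap is the mini-dimensional reduction, which you correctly identify as the technical core but then only assert. Two concrete problems. First, as literally stated, the claim that ``the supremum of $\rho(x')$ over $\mathcal B$ is attained within $\spanl\{u_i,u_{i+1}\}$'' is false for a fixed $x$ with components outside that plane: the center $y$ then also has such components and the maximizer on the ball generically does too. The true statement concerns the two-level optimization, jointly over the level set $\{\rho=\rho(x)\}$ and over the ball (cf.\ Section~2.2 of the paper). Second, the proposed mechanism --- collapse the components of $x$ outside $\spanl\{u_i,u_{i+1}\}$ while ``preserving $\rho(x)$ and the admissible radius'' and only enlarging the worst case --- is not a proof and is not how the references argue: modifying $x$ changes $y$ and $\|x-y\|_A$ in a coupled, nonmonotone way, and no such direct collapsing argument is known to work. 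The actual proofs characterize the maximizer on the ball via Lagrange multiplier (Karush--Kuhn--Tucker) conditions and then perform a delicate level-set analysis, or use the gradient-flow comparison of \cite{KNN2009}; this occupies essentially all of the length of those papers and cannot be waved through. A smaller point: deducing $\rho(x')\le\rho(x)$ from the ratio bound presupposes $\rho(x')<\lambda_{i+1}$, which must itself come out of the ball analysis rather than being automatic.
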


Thm.~\ref{t.pinres} is up to now the only known sharp estimate for 
this and various improved and faster converging preconditioned gradient type eigensolvers.
The most popular of these improved solvers are the
preconditioned steepest descent iteration (PSD) and the locally optimal preconditioned
conjugate gradients (LOPCG) iteration (and also their block variants)
\cite{KNY1998}.
All these eigensolvers apply the
Rayleigh-Ritz procedure to proper subspaces of iterates for convergence acceleration, see
\cite{KNN2003etna}. A systematic hierarchy of these preconditioned gradient iterations and
their variants for exact inverse preconditioning (which amounts to certain Invert-Lanczos
processes \cite{NEY2009}) has been suggested in \cite{NEY2001H}.
The aim of this paper is to prove a new sharp convergence estimate for the 
preconditioned steepest descent iteration (PSD).

\subsection{Assumptions on the preconditioner}
\label{ss.genass}
A drawback of Thm.~\ref{t.pinres} is its assumption (\ref{e.bass}) on the preconditioner $T$.
The existence of constants $1\pm\gamma$ with $\gamma<1$ is not guaranteed for arbitrary
(multigrid) preconditioners, but can always be ensured after a proper scaling of the preconditioner.
To make this clear, take an   
arbitrary pair of symmetric positive definite matrices $A,T\in\Rnn$. Then 
constants  $\gamma_1,\,\gamma_2>0$ exist, so that 
the spectral equivalence
 \begin{equation}
    \label{e.bass12}
    \gamma_1(z,T^{-1}z)\leq (z,Az) \leq \gamma_2 (z,T^{-1}z),
                                   \quad\forall z\in\Rn
  \end{equation}
holds.
If a preconditioner $T$ satisfies (\ref{e.bass12}), then the scaled preconditioner
$(2/(\gamma_1+\gamma_2))T$ fulfills  (\ref{e.bass}) with
\begin{align}
  \label{e.ga12} 
  \gamma=\frac{\gamma_2-\gamma_1}{\gamma_1+\gamma_2}.
\end{align}
A clear benefit of the preconditioned steepest descent iteration is, that
by computing the optimal step length parameter $\vartheta_\mathrm{opt}$, see Eq.~(\ref{e.psd}),
the scaling parameter $2/(\gamma_1+\gamma_2)$ is determined implicitly.
Therefore, we can use the assumption (\ref{e.bass12}) or alternatively the more 
convenient form (\ref{e.bass}). This guarantees the practical applicability of
the preconditioned steepest descent iteration for any preconditioner satisfying (\ref{e.bass12})
or in its scaled form satisfying (\ref{e.bass}).

\subsection{The optimal-step-length iteration: Preconditioned steepest descent}

A disadvantage of the gradient iteration (\ref{e.preeig}) is its fixed step length 
resulting in a non-optimal new iterate $x'$. An obvious improvement is to compute $x'$
as the minimizer of the Rayleigh quotient (\ref{e.rayquo}) in the affine space
$\{x-\vartheta T(Ax-\rho(x) Bx);\,\vartheta\in\RR\}$. That means we consider the 
optimally scaled iteration 
\begin{equation}
   \label{e.psd} 
      x' =x-\vartheta_\mathrm{opt} T(Ax-\rho(x) Bx)
\end{equation}
with the optimal step length
$$ \vartheta_\mathrm{opt}=\arg\min_{\vartheta\in\RR}
       \rho(x-\vartheta T(Ax-\rho(x) Bx) )$$
is considered.
This iteration is called the {\em preconditioned steepest
descent iteration} (PSD), \cite{TEV2000,KNN2003etna,OVT2006}. 
Computationally one gets $x'$ and its Rayleigh
quotient $\rho(x')$ by the Rayleigh-Ritz procedure.
If $T(Ax-\rho(x) Bx)$ is not an eigenvector then
$(x',\rho(x'))$ is a Ritz pair of $(A,B)$ with respect 
to the column space of $[x,T(Ax-\rho(x) Bx)]$. 
As (\ref{e.psd}) aims at a minimization of the Rayleigh
quotient, $\rho(x')$ is the smaller Ritz value and $x'$ is
an associated Ritz vector.
The Rayleigh-Ritz procedure computes the optimal step length implicitly;
the step length is determined by the components of the associated 
eigenvector of Rayleigh-Ritz projection matrices.
Consequently the preconditioned steepest descent iteration converges faster 
than the fixed-step-length scheme (\ref{e.preeig}) since
\begin{align}
  \label{e.psdpin} 
   \rho(x-\vartheta_\mathrm{opt} T(Ax-\rho(x) Bx))\leq \rho(x-T(Ax-\rho(x) Bx)).
\end{align}
Therefore Thm.~\ref{t.pinres} serves as a trivial upper estimate for 
the accelerated iteration (\ref{e.psd}).  The aim of this paper is to prove 
the following sharp convergence estimate for (\ref{e.psd}).

\begin{theorem}
  \label{t.psd}
Let $x\in\Rn$ and $x'$ be the PSD iterate given by (\ref{e.psd}). The  preconditioner
$T$ is assumed to satisfy (\ref{e.bass12}).  
If $\lambda_i\leq\rho(x)< \lambda_{i+1}$, $i=1,\ldots,n-1$, then $\rho(x')\leq\rho(x)$ and 
either $\rho(x')\leq\lambda_i$ or
\begin{equation}
 \label{e.psdest}
  \begin{aligned}
    &\frac{\rho(x')-\lambda_i}{\lambda_{i+1}-\rho(x')}\leq
    \sigma^2 
    \frac{\rho(x)-\lambda_i}{\lambda_{i+1}-\rho(x)}, \\[0.4em]
  &\text{with } \quad  \sigma=\frac{\kappa+\gamma(2-\kappa)}{(2-\kappa)+\gamma\kappa}, 
  \qquad 
      \kappa=\frac{\lambda_i(\lambda_n-\lambda_{i+1})}{\lambda_{i+1}(\lambda_n-\lambda_i)}
  \end{aligned}
\end{equation}
and $\gamma:=(\gamma_2-\gamma_1)/(\gamma_1+\gamma_2)$. If $\gamma_1=1-\gamma$ and $\gamma_2=1+\gamma$
as in (\ref{e.bass}), then $(\gamma_2-\gamma_2)/(\gamma_1+\gamma_2)=\gamma$.
The estimate is sharp and can be attained for $\rho(x)\to\lambda_i$ in the 3D invariant subspace 
associated with the eigenvalues $\lambda_i$, $\lambda_{i+1}$ and $\lambda_n$, $i+1\neq n$.
\end{theorem}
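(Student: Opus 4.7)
The plan is to combine two ideas: (i) the variational characterization of PSD as a Rayleigh--Ritz projection onto a $2$-dimensional span whose direction is independent of any positive rescaling of the preconditioner, and (ii) a mini-dimensional analysis that reduces the worst-case configuration to a $3$-dimensional $B^{-1}A$-invariant subspace, in the spirit of the geometric proof of Theorem \ref{t.pinres} in \cite{KNN2003,KNN2009}.

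First I would observe that $x'$ minimizes $\rho$ on $\mathcal V := \mathrm{span}\{x,\,T(Ax-\rho(x)Bx)\}$, and that $\mathcal V$ is unchanged if $T$ is multiplied by any positive scalar. Hence, for every $\tau>0$, the auxiliary fixed-step iterate $x-\tau T(Ax-\rho(x)Bx)$ lies in $\mathcal V$, so $\rho(x')\le \rho(x-\tau T(Ax-\rho(x)Bx))$. Choosing the standard $\tau=2/(\gamma_1+\gamma_2)$ and applying Theorem \ref{t.pinres} immediately reproduces the old factor $\gamma+(1-\gamma)\lambda_i/\lambda_{i+1}$, but no single $\tau$ is able to deliver the improved $\sigma$ of (\ref{e.psdest}). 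The improvement must come from exploiting the full $2$-D Ritz projection, not a single point along the ray.

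The core of the proof is therefore a mini-dimensional reduction. Given arbitrary $x$ with $\rho(x)=\mu\in[\lambda_i,\lambda_{i+1})$ and any $T$ satisfying (\ref{e.bass12}), I would construct a surrogate pair $(\tilde x,\tilde T)$, with $\tilde T$ still satisfying (\ref{e.bass12}) with the same $\gamma_1,\gamma_2$, for which $\tilde x$ and $\tilde T(A\tilde x-\mu B\tilde x)$ both lie in the $3$-dimensional invariant subspace $U$ spanned by eigenvectors $u_i,u_{i+1},u_n$, while the smaller Ritz value of $(A,B)$ on $\mathrm{span}\{\tilde x,\tilde T(A\tilde x-\mu B\tilde x)\}$ is at least $\rho(x')$. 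That exactly three eigenvalues appear (rather than two as for Theorem \ref{t.pinres}) is forced by step-length optimization: the worst admissible degradation of $Tr$ is produced by concentrating the perturbation along the extremal eigendirection $u_n$, which tilts the $2$-D Ritz span as much as possible away from the ideal plane $\mathrm{span}\{x,A^{-1}Bx\}$. Within the reduced $3$-D problem I would write $x=\alpha_iu_i+\alpha_{i+1}u_{i+1}+\alpha_nu_n$ in a $B$-orthonormal eigenbasis, parametrize the admissible $T$ by the restriction of $T^{-1/2}AT^{-1/2}$ to $U$ (three eigenvalues in $[\gamma_1,\gamma_2]$ plus a rotation), and explicitly solve the $2\times 2$ projected generalized eigenproblem for the smaller Ritz value. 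After the M\"obius substitution $t(\nu)=(\nu-\lambda_i)/(\lambda_{i+1}-\nu)$ the estimate (\ref{e.psdest}) becomes an algebraic inequality on $t(\rho(x'))/t(\mu)$, whose supremum over the finite parameter set, in the limit $\mu\downarrow\lambda_i$, should collapse to the advertised $\sigma^2$ with the compact M\"obius form; the maximizer simultaneously exhibits the sharpness statement.

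The main obstacle is the mini-dimensional step itself. The $2$-D argument used for Theorem \ref{t.pinres} cannot be transcribed directly, because the Rayleigh--Ritz projection couples $x$ and $Tr$ in a nontrivial way along the $u_n$ direction; showing that any further eigencomponent of $x$ or $Tr$ can only \emph{decrease} the worst-case smaller Ritz value requires a convexity or monotonicity argument in the spectral-equivalence cone of admissible preconditioners. Once the $3$-D reduction is secured, the remaining optimization is elementary but algebraically delicate, and matching its optimum with the specific M\"obius form of $\sigma$ is the last nontrivial check.
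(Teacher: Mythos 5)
Your high-level plan does coincide with the architecture of the actual proof: a reduction of the worst case to a three-dimensional invariant subspace spanned by the eigenvectors for $\lambda_i,\lambda_{i+1},\lambda_n$, followed by an explicit mini-dimensional optimization whose optimum is the M\"obius-type factor $\sigma$. However, both of the steps that carry the entire weight of the argument are left as announced goals rather than proofs, and for the first one your proposed mechanism is not the one that works. You suggest building a surrogate pair $(\tilde x,\tilde T)$ and proving that extra eigencomponents can only help, via ``a convexity or monotonicity argument in the spectral-equivalence cone of admissible preconditioners'' --- you yourself flag this as the main obstacle, and no such argument is supplied. The paper does not argue by surrogates at all. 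It first replaces the preconditioner by the geometric object it generates: the set of all iterates $T r$ over admissible $T$ is a full ball $\mathcal B_\gamma(x)$, and the PSD search directions sweep out the circular cone $\mathcal F_\gamma(x)$ with vertex $\mu(x)x$ enclosing that ball (in the reciprocal variables $\mu=1/\lambda$, where the relevant norm is Euclidean). The reduction to 3D is then obtained by writing the worst case as a two-level constrained minimization over the level set of the Rayleigh quotient and over the cone, and applying the Karush--Kuhn--Tucker stationarity conditions; eliminating the Ritz vector $w$ from the $x$- and $d$-components of the stationarity system yields an identity $p_3(B)x=0$ with a cubic polynomial $p_3$, which forces $x$ (and then $w$) to have at most three non-zero eigencomponents. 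Your heuristic that the worst case ``concentrates the perturbation along $u_n$'' is also not borne out by the actual geometry: the extremal search direction is $\mu(x)x+(1-\gamma^2)r+\gamma\sqrt{1-\gamma^2}\,(x\times r)/\|x\|$, i.e.\ it has a component \emph{orthogonal} to $\mathrm{span}\{x,r\}$, selected by a sign condition on $(r,B(x\times r))$.

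The second gap is the mini-dimensional optimization itself. Calling it ``elementary but algebraically delicate'' and asserting that the supremum ``should collapse'' to $\sigma^2$ is not a derivation, and your proposed parametrization (eigenvalues of $T^{-1/2}AT^{-1/2}$ in $[\gamma_1,\gamma_2]$ plus a rotation) would be far more cumbersome than what is needed. The paper's route is: (a) restrict to componentwise non-negative $x$ by a reflection argument; (b) orthogonalize the admissible search directions against $x$, so that they form a line segment across the cone, and show by a stationarity argument that the poorest Ritz value occurs at an endpoint; (c) decide which endpoint by a monotonicity and sign computation; (d) translate the resulting Ritz-value bound into a tangency condition between a line and a family of ellipses in the affine plane $e_j+\mathrm{span}\{e_k,e_l\}$, giving the key identity $\tilde a^2/a^2=c_k^2c_l^2/(b^2c_k^2+a^2c_l^2)$; and (e) prove monotonicity of the resulting expression in $\Delta=(\mu_j-\mu)/(\mu-\mu_k)$ and optimize over the level-set parameter $t$, which produces exactly $\sigma^{-2}=\bigl((2-\kappa)+\gamma\kappa\bigr)^2/\bigl(\kappa+\gamma(2-\kappa)\bigr)^2$ and simultaneously the sharpness claim as $\Delta\to 0$. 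None of this is routine, and without it your proposal establishes only the trivial bound inherited from Theorem~\ref{t.pinres}, which you correctly note is weaker than (\ref{e.psdest}).
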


The limit case $\gamma=0$ of Thm.~\ref{t.psd} is an estimate for the convergence
of the steepest descent iteration which minimizes the Rayleigh
quotient in the space $\spanl\{x,A^{-1}Bx\}$. Then the convergence estimate 
(\ref{e.psdest}) reads
$$  \frac{\rho(x')-\lambda_i}{\lambda_{i+1}-\rho(x')}\leq
    \left(\frac{\kappa}{2-\kappa}\right)^2
    \frac{\rho(x)-\lambda_i}{\lambda_{i+1}-\rho(x)} $$
with $\kappa$ given by (\ref{e.psdest}).
A proof of this result (in the general setup of steepest ascent and steepest descent
for $A$ and $A^{-1}$) has recently been given in \cite{NOZ2011}; for the smallest eigenvalue
(that is for $i=1$) the estimate was proved in \cite{KNS1991}.
This paper generalizes this result
on steepest decent for $A^{-1}M$ to the preconditioned variant of this iteration. For the following analysis
we always assume a properly scaled preconditioner satisfying (\ref{e.bass}). If $T$ fulfills
(\ref{e.bass2}) we use $(2/(\gamma_1+\gamma_2))T$ 
(and call the scaled preconditioner once again $T$) so that $\gamma$ is given by (\ref{e.ga12})
and (\ref{e.bass}) is fulfilled. This substitution does not restrict the generality of the approach
since the scaling constant is implicitly computed with $\vartheta_{\mathrm opt}$ in the 
Rayleigh-Ritz procedure. We prefer to work with (\ref{e.bass}) since this allows to set up the proper geometry
for the following proof.

Only few convergence estimates for PSD have been published. Of major importance are
the  work of Samokish \cite{SAM1958}, the results of Knyazev given in
Thm.~3.3 together with Eq.~(3.3) in
\cite{KNY1987} and further the results of Ovtchinnikov \cite{OVT2006}. 
Knyazev uses similar assumptions
and applies Chebyshev polynomials to derive the convergence estimate. 
Ovtchinnikov in \cite{OVT2006} derives an asymptotic convergence factor
which represents the average error reduction per iteration; further
non-asymptotic  estimates are proved under specific assumptions on the preconditioner.
The result of Samokish (only available in Russian) is reproduced
in a finite-dimensional non-asymptotic form as Thm.~2.1 in \cite{OVT2006}; see also Cor. 6.4 and the
following paragraph in \cite{OVT2006} for a critical discussion and comparison of these estimates.
Due to different assumptions and a different form of the convergence estimates these results are
not easy to compare with (\ref{e.psdest}); an important difference is that in Thm.~\ref{t.psd} the restrictive
assumption $\rho(x)<\lambda_2$ is not needed.

\subsection{Overview}

This paper is organized as follows. In Sec.~2 the geometry of PSD is introduced.
Further the problem is reformulated in terms of reciprocals of the eigenvalues
which makes the geometry of PSD accessible within the Euclidean space.
Sec.~3 gives a proof that PSD attains its poorest convergence in a three-dimensional
invariant subspace of the $\Rn$. Sec.~4 contains a mini-dimensional analysis of PSD.
Finally the three-dimensional convergence estimates are embedded into the full $\Rn$ which
completes the convergence analysis.

\section{The geometry of the preconditioned steepest descent iteration}

For the analysis of the preconditioned steepest descent iteration it is 
convenient to work with the linear pencil $B-\mu A$ (instead of $A-\lambda B$).
The advantage is that the $A$-norm by a proper basis transformation turns into
the Euclidean norm, see below.
A further benefit of this representation is that a generalization to a
symmetric positive semidefinite or even only a symmetric $B$ is possible
(cf.~the analysis of (\ref{e.preeig}) in \cite{KNN2009}).
Hence for the pencil $B-\mu A$ the eigenvalues $\mu_i$ are given by
$$ Bx_i=\mu_iAx_i \quad \text{ with } \mu_i=1/\lambda_i, \quad i=1,\ldots,n.$$
Therefore the problem is to compute the largest eigenvalue $\mu_1$ by maximizing
the inverse of the Rayleigh quotient (\ref{e.rayquo})
\begin{align}
  \label{e.rayquomu}
  \mu(x):=\frac{(x,Bx)}{(x,Ax)}=\frac1{\rho(x)}.
\end{align}

\begin{lemma}
\label{l.basis}
Without loss of generality we can assume that $A=I$ and that $B=\diag(\mu_1,\ldots,\mu_n)$
with simple eigenvalues $\mu_1>\mu_2>\ldots>\mu_n>0$.
This transforms (\ref{e.psd}) (after multiplication with $\mu(x)=1/\rho(x)$
and by denoting the transformed preconditioner again by $T$)
in the form 
\begin{equation}
   \label{e.psdmu} 
      \mu(x)x' =\mu(x)x+\vartheta_\mathrm{opt} T(Bx-\mu(x)x)
\end{equation}
with the optimal step length
$$ \vartheta_\mathrm{opt}=\arg\max_{\vartheta\in\RR}\mu(\mu(x)x+\vartheta T(Bx-\mu(x)x) ).$$
The quality constraint (\ref{e.bass}) on the preconditioner $T\in\Rnn$
turns into a bound for the spectral norm $\| \cdot\|$ of the  
symmetric matrix $I-T$ which reads
\begin{align}
  \label{e.bass2}
  \|I-T\|\leq\gamma.
\end{align}
\end{lemma}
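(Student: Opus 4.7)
The plan is a two-step change of basis that simultaneously makes $A$ the identity and $B$ diagonal, followed by a bookkeeping check of how the iteration (\ref{e.psd}) and the preconditioner condition (\ref{e.bass}) transform under this basis change.

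First I would Cholesky-factor $A=LL^T$ and set $y=L^T x$; this turns the pencil $B-\mu A$ into the standard pencil $L^{-1}BL^{-T}-\mu I$, whose matrix $\tilde B:=L^{-1}BL^{-T}$ is symmetric positive definite. Orthogonally diagonalize $\tilde B=Q\Lambda Q^T$ with $\Lambda$ the diagonal matrix of (positive) eigenvalues, set $M:=Q^TL^T$, and use the new coordinate $w:=Mx$; then the bilinear forms transform as $A\mapsto M^{-T}AM^{-1}=I$ and $B\mapsto M^{-T}BM^{-1}=\Lambda$. The simplicity $\mu_1>\cdots>\mu_n$ comes from a standard density argument: pairs $(A,B)$ of symmetric positive definite matrices whose pencil has simple spectrum form an open dense subset, and every quantity in the final estimate (\ref{e.psdest}) depends continuously on $(A,B)$, so once the theorem is established under the simplicity assumption it extends to the closure.

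Next I would transform the iteration. Multiplying (\ref{e.psd}) by $\mu(x)=1/\rho(x)$ gives
\begin{equation*}
   \mu(x)\,x' \;=\; \mu(x)\,x + \vartheta_\mathrm{opt}\,T\bigl(Bx-\mu(x)Ax\bigr).
\end{equation*}
Applying $M$ on the left and using the fact that the residual transforms as a dual vector, $M^{-T}(Bx-\mu(x)Ax)=\Lambda w-\mu(w)w$, together with $\mu(w)=\mu(x)$ (invariance of the Rayleigh quotient under the basis change), one obtains
\begin{equation*}
   \mu(w)\,w' \;=\; \mu(w)\,w + \vartheta_\mathrm{opt}\,\tilde T\bigl(\Lambda w-\mu(w)w\bigr),
   \qquad \tilde T := MTM^T.
\end{equation*}
Renaming $\tilde T$ back to $T$, $\Lambda$ back to $B$, and $w$ back to $x$ yields (\ref{e.psdmu}).

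Finally I would translate (\ref{e.bass}). Written as operator inequalities it reads $(1-\gamma)T^{-1}\preceq A\preceq(1+\gamma)T^{-1}$. Conjugating by $M^{-T}$ on the left and $M^{-1}$ on the right turns $A$ into $I$ and $T^{-1}$ into $(MTM^T)^{-1}=\tilde T^{-1}$; the inequality then reads $(1-\gamma)I\preceq\tilde T\preceq(1+\gamma)I$, equivalent to $\|I-\tilde T\|\leq\gamma$, which is (\ref{e.bass2}) after renaming. The only step requiring real care in the whole argument is the transformation law for the preconditioner: because $T$ maps residuals (dual vectors) to corrections (primal vectors), its push-forward under $x\mapsto Mx$ is $T\mapsto MTM^T$, not the pull-back $M^{-T}TM^{-1}$ that governs symmetric forms such as $A$ itself. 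Once this distinction is kept straight, both (\ref{e.psdmu}) and (\ref{e.bass2}) are immediate.
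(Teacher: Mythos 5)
Your proposal is correct and follows essentially the same route as the paper: Cholesky factorization plus orthogonal diagonalization to reduce to $A=I$, $B$ diagonal, explicit transport of the iteration and of the spectral-equivalence bound to the new basis, and a continuity/perturbation argument to justify the simple-eigenvalue assumption. Your explicit identification of the congruence law $T\mapsto MTM^{T}$ for the preconditioner (as opposed to the inverse congruence for $A$ and $B$) is the one point the paper leaves implicit, and you handle it correctly.
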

\begin{proof}
The generalized eigenvalue problem (\ref{e.mgevp}) is first transformed into a standard 
eigenvalue problem $C^{-1}BC^{-T}y=\mu y$ using the Cholesky factorization $A=CC^T$,
$y=C^Tx$ and $\mu=1/\lambda$. The symmetric matrix  $C^{-1}BC^{-T}$ can be
diagonalized by means of an orthogonal similarity transformation. Then all transformations are
applied to (\ref{e.psd}). For convenience we denote the transformed system matrix by $B$.
Further the transformed preconditioner is  denoted, once again, by $T$, since
(\ref{e.bass}) still holds with $A=I$. All this results in (\ref{e.psdmu}) and (\ref{e.bass2}).
 
To show that the proof of Thm.~\ref{t.psd} can be restricted
to the simple eigenvalue case we apply the same continuity argument which has been 
used in Theorem 2.1 in \cite{KNN2009}. 
The argument is based on a perturbation $B_\epsilon$
of $B$ having only simple eigenvalues. Then the perturbation $\epsilon$ is reduced 
to 0. The continuous dependence of $x'$ and $\mu(x')$ on the perturbation
completes the proof. This reasoning can be transferred to PSD since the Rayleigh-Ritz
procedure preserves the continuity of the eigenvalue approximations. 
\end{proof}

Next the reformulation of Thm.~\ref{t.psd} in terms of the $\mu$-notation is stated.
\begin{theorem}
  \label{t.psdmu}
  If $\mu_{i+1}<\mu(x)\leq\mu_i$ then $\mu(x')\geq\mu(x)$ and 
  either $\mu(x')\geq\mu_i$ or
  \begin{equation}
  \label{e.psdestmu}
  \begin{aligned}
    &\frac{\mu_i-\mu(x')}{\mu(x')-\mu_{i+1}}
    \leq\sigma^2
    \frac{\mu_i-\mu(x)}{\mu(x)-\mu_{i+1}}, \\[0.4em]
    &\text{with } \sigma=\frac{\kappa+\gamma(2-\kappa)}{(2-\kappa)+\gamma\kappa}
    \quad \text{ and } \quad
      \kappa=\frac{\mu_{i+1}-\mu_n}{\mu_i-\mu_n}.
  \end{aligned}
  \end{equation}
The estimate is sharp and can be attained for $\mu(x)\to\mu_i$ in the 3D invariant subspace 
associated with the eigenvalues $\mu_i$, $\mu_{i+1}$ and $\mu_n$, $i+1\neq n$.
\end{theorem}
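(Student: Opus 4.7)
The plan is to work in the $\mu$-framework of Lemma~\ref{l.basis}, where $A = I$, $B = \mathrm{diag}(\mu_1,\ldots,\mu_n)$ and the admissible preconditioners are the symmetric matrices with $\|I - T\| \leq \gamma$. The PSD iterate $x'$ realises the maximum of $\mu(\cdot)$ on the 2D trial space $\mathrm{span}\{x,\,T(Bx - \mu(x)x)\}$, so a sharp bound is obtained by minimising $\mu(x')$ over all admissible $T$ and all $x \in \Rn$ with prescribed value $\mu(x)$. The key structural fact is that three quantities drive the worst case: the two eigenvalues $\mu_i, \mu_{i+1}$ that gap the current Rayleigh quotient, and a third eigenvalue $\mu_n$ against which the preconditioned gradient direction can be perturbed most strongly.

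The first main step is to reduce the analysis to a three-dimensional invariant subspace of $B$. Expand $x = \sum_{j=1}^n x_j e_j$ in the eigenbasis; the residual $r = Bx - \mu(x)x$ then has components $(\mu_j - \mu(x))x_j$, whose signs change at the index $i$. For the unpreconditioned case $\gamma = 0$, this is exactly the situation in which the classical Kantorovich-type inequality identifies the worst-case iterate as living in the 3D subspace spanned by $e_i, e_{i+1}, e_n$. For $\gamma > 0$, I would extend this by a perturbation-cum-monotonicity argument: any nonzero component of $x$ in the intermediate eigenspaces $e_{i+2},\ldots,e_{n-1}$ only provides an extra direction for the Rayleigh-Ritz step to exploit, so restricting to the 3D subspace is worst-case. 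The reasoning used for Thm.~\ref{t.pinres} in \cite{KNN2003,KNN2009} can be adapted to this preconditioned setting.

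The second main step is a mini-dimensional analysis in the reduced 3D problem. With $B = \mathrm{diag}(\mu_i,\mu_{i+1},\mu_n)$ and $\|I - T\|\leq \gamma$, I would parametrise $x$ by three coordinates and compute $\mu(x')$ explicitly from the $2\times 2$ Ritz pencil on $\mathrm{span}\{x,\,T(Bx - \mu(x)x)\}$. Maximising the ratio $(\mu_i - \mu(x'))/(\mu(x') - \mu_{i+1})$ over admissible $T$ should single out an extremal $T$ whose eigenvalues are $1\pm\gamma$ on the relevant two-dimensional subspace, yielding the factor $\sigma^2$ in (\ref{e.psdestmu}). The M\"obius form $\sigma = (a + \gamma)/(1 + a\gamma)$ with $a = \kappa/(2-\kappa)$, which is the hyperbolic-tangent addition of the unpreconditioned factor $a$ and the preconditioner quality $\gamma$, indicates that $\sigma$ emerges from the composition of two rank-one updates to the Ritz pencil, and suggests a change of variables in which the optimisation linearises.

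Finally, the 3D estimate is lifted back to $\Rn$ using the reduction of the first step, and sharpness follows by exhibiting an explicit extremal configuration in the 3D invariant subspace in the limit $\mu(x)\to\mu_i$. The principal obstacle I anticipate is the 3D reduction: in the fixed-step iteration only two eigendirections are active, but the Rayleigh-Ritz optimisation couples all eigencomponents of $x$ nonlinearly through the Ritz pencil, so proving that an adversarial $T$ cannot exploit components in the intermediate eigenspaces is more delicate than the analogous reduction in Thm.~\ref{t.pinres}. A secondary difficulty is identifying the worst-case $T$ in the 3D problem, which is a constrained max--min over a continuous matrix manifold and requires a careful analysis of boundary configurations with $\|I - T\| = \gamma$.
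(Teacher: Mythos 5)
Your outline reproduces the paper's architecture (reduce the worst case to a three-dimensional $B$-invariant subspace, do a mini-dimensional analysis there, lift back to $\Rn$), but both load-bearing steps are left as hopes rather than arguments, and the justification you do offer for the first one would not survive scrutiny. The reduction to 3D is not obtained in the paper by a ``perturbation-cum-monotonicity'' argument, and your heuristic --- that intermediate eigencomponents of $x$ ``only provide an extra direction for the Rayleigh-Ritz step to exploit'' --- does not make sense as stated: the Rayleigh-Ritz space is the fixed two-dimensional span of $x$ and the search direction regardless of how many eigencomponents $x$ has, and the adversarial preconditioner varies simultaneously with $x$, so zeroing intermediate components changes both the level-set constraint and the admissible cone of directions in a way that is not monotone. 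The paper instead sets up a genuine two-level constrained minimization (over $x\in\lmun$ and over search directions $d$ in the cone $\fgax$, the union over all admissible $T$ of the trial lines, which is the circular cone enclosing the ball $\bgax$ of center $Bx$ and radius $\gamma\|Bx-\mu(x)x\|$), applies the Karush--Kuhn--Tucker conditions, and extracts a cubic polynomial identity $p_3(B)x=0$ forcing $x$ to have at most three nonzero eigencomponents. That KKT/polynomial mechanism is the missing idea; moreover the reduction lands in an \emph{arbitrary} invariant subspace $\spanl\{e_j,e_k,e_l\}$, and only a separate monotonicity argument in $\kappa$ at the very end identifies $j=i$, $k=i+1$, $l=n$ as worst --- you assume that identification up front.

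The second step is equally unsubstantiated. Saying that maximising the decrease over admissible $T$ ``should single out an extremal $T$ with eigenvalues $1\pm\gamma$'' and that the M\"obius form of $\sigma$ ``suggests a change of variables in which the optimisation linearises'' names the answer without deriving it. In the paper the 3D cone optimization is genuinely geometric: the worst direction lies on the boundary circle of a cross-section of $\fgax$ (interior critical points force an eigenvector), is pinned down by a sign computation involving $x\times r$ and the ordering of the eigenvalues (after restricting to $x\ge 0$ via Householder reflections), and the resulting Ritz value is then bounded through a tangency argument between two ellipses in the affine plane $e_j+\spanl\{e_k,e_l\}$, followed by explicit monotonicity in $\Delta$ and a one-variable optimization in $t$ that produces $\sigma$. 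None of this, nor any workable substitute, appears in your proposal, so as it stands the proof has two genuine gaps exactly at the points you yourself flag as the principal obstacles.
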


\subsection{The cone of PSD iterates}

The starting point of the geometric description of PSD is the {\em non-scaled}
 preconditioned 
gradient iteration (\ref{e.preeig}) whose $\mu$-representation reads
\begin{equation}
  \label{e.pinvit}
  \begin{aligned}
     \mu(x) x'=\mu(x) x+T(Bx-\mu(x)x)
              =Bx-(I-T)(Bx-\mu(x)x).
  \end{aligned}
\end{equation} 
A central idea of its convergence analysis in
\cite{NEY2001a,NEY2001b,KNN2003} is to treat the preconditioners
{\em on the whole}. This means that all admissible preconditioners
satisfying the  spectral equivalence (\ref{e.bass2}) are inserted to
(\ref{e.pinvit}) with $x$ being fixed. This results in a 
set $\bgax$ of all possible iterates
\begin{align}
  \label{e.bgamma}
  \bgax:=\{Bx-(I-T)(Bx-\mu(x)x);\;  T \text{ s.p.d. with }
       \|I-T\|\leq\gamma\}. 
\end{align}
The set $\bgax$ is a full ball with the center $Bx$ and the radius 
$\gamma\|Bx-\mu(x)x\|$.
The subject of the convergence analysis of (\ref{e.pinvit})
in \cite{NEY2001a,NEY2001b} is to localize a vector 
of poorest convergence (i.e.~with the smallest Rayleigh quotient)
in $\mathcal{B}_\gamma(x)$ and to derive an estimate
for its Rayleigh quotient.

\begin{figure}[t]
\begin{minipage}[c]{0.45\textwidth}
  \begin{center}
   \includegraphics[height=4cm]{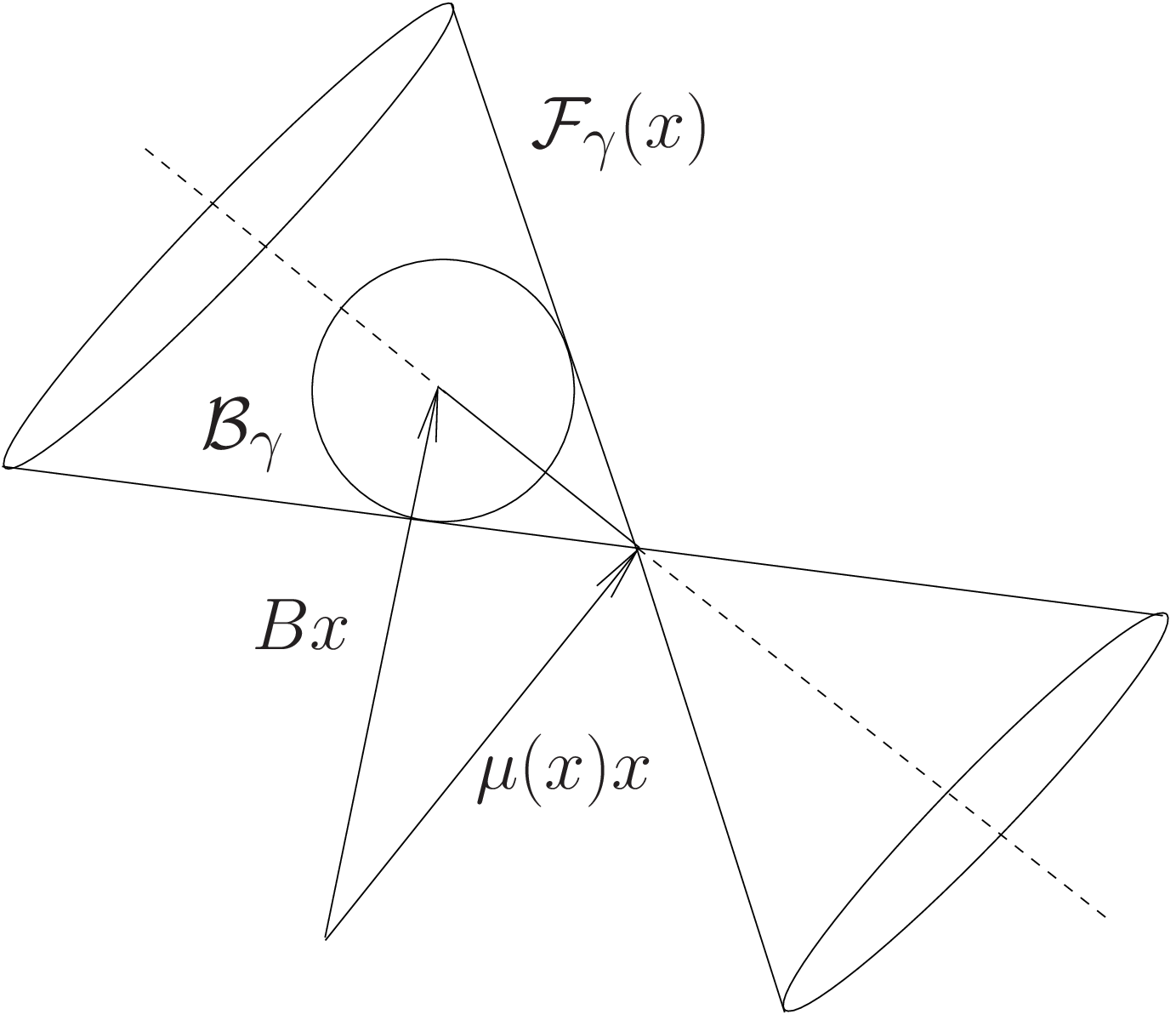}
  \end{center}
  \caption{The circular cone $\mathcal{F}_\gamma(x)$.}
  \label{f.kappa}
\end{minipage}\;
\begin{minipage}[c]{0.45\textwidth}
  \begin{center}
   \includegraphics[height=4cm]{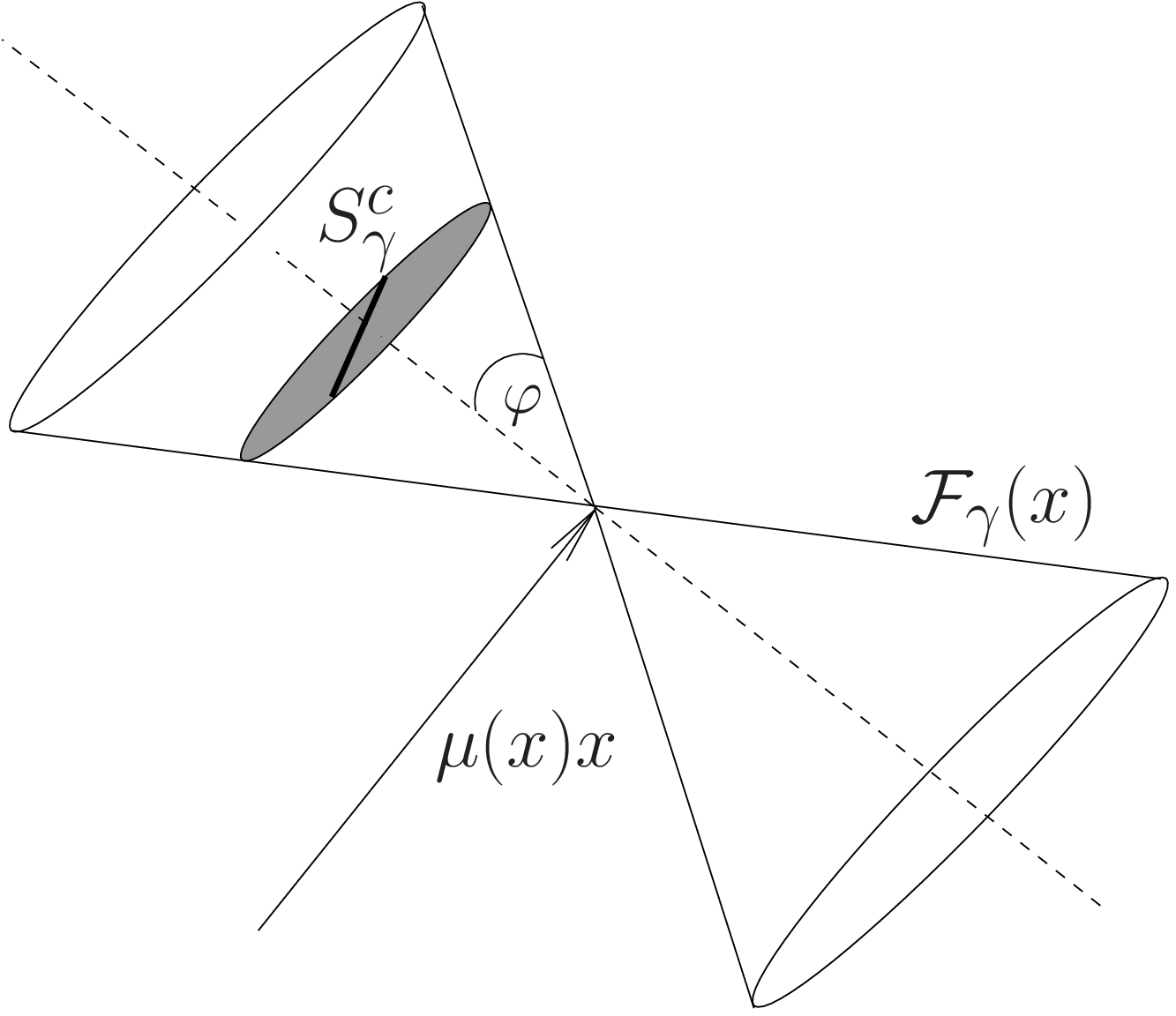}
  \end{center}
  \caption{The cross section $S_\gamma^c$ and the line segment
   $S_\gamma$ (bold line).}
  \label{f.2kappa}
\end{minipage}
\end{figure}

In contrast to (\ref{e.pinvit}) the PSD iteration (\ref{e.psdmu}) works with an optimal
step length parameter $\vartheta_\mathrm{opt}$ in order to  maximize the Rayleigh
quotient in the one-dimensional affine space 
\begin{align}
  \label{e.1Daff}
  \mu(x)x+\vartheta T(Bx-\mu(x)x),\qquad  \vartheta\in\RR.
\end{align}
The union of all these affine spaces for all the preconditioners 
satisfying (\ref{e.bass2}) is the smallest circular cone 
with its vertex in $\mu(x)x$ which encloses $\bgax$.
This cone is denoted by $\mathcal{F}_\gamma(x)$, see Fig.~\ref{f.kappa}, and it holds that
\begin{equation}
  \label{e.fgamma}
  \begin{aligned}
     \mathcal{F}_\gamma(x)&:=
        \{\mu(x)x + \vartheta(y-\mu(x)x);\; y\in\mathcal{B}_\gamma(x);\;
                \vartheta\in\RR\}\\
      &= \{ \mu(x)x+\vartheta d;\; \|Bx-(\mu(x)\,x+ d)\|\leq\gamma \|Bx-\mu(x)\, x\|; \; 
                        \vartheta\in\RR \}.          
  \end{aligned}
\end{equation}

\subsection{The geometric convergence analysis as a two-level optimization}
\label{ss.twolev}

The geometric convergence analysis of preconditioned steepest descent consists of
estimating the poorest convergence behavior.
Therefore a two-level optimization problem is to be solved. On the one hand one
has to determine this affine space (\ref{e.1Daff}) in the cone $\fgax$ in which
the maximum of the Rayleigh quotient (i.e.~the largest Ritz value in this space)
takes its smallest value;  this vector is associated
with the poorest convergence due to the choice of the preconditioner.
On the other hand the cone $\fgax$ depends on $x$; hence one can
analyze the dependence of this vector of poorest convergence 
on all vectors in the $\Rn$ having the same Rayleigh quotient as $x$. This amounts
to considering the level set of the Rayleigh quotient of
vectors having a fixed Rayleigh quotient $\mu_0$, i.e.
$$\lmun :=\{x\in\Rn;\; \mu(x)=\mu_0\}.$$
Let $x^*\in  \lmun$ be the minimizer representing the poorest convergence and let  
$d^*\in {\mathcal F}_\gamma(x)-\mu(x)x$  be the search direction of 
poorest convergence.
So the {\em two-level optimization} is
\begin{align*}
   \underline{\mu}:=\min_{x\in{\mathcal L}(\mu_0)} \;
             \min_{d\in{\mathcal F}_\gamma(x)-\mu_0 x} \mu(\mu_0 x+\vartheta_\text{opt}[x,d]d).
\end{align*}
Therein $\mu(x)x+\vartheta_\text{opt}[x,d]d$ is the Ritz vector which is associated with the 
larger Ritz value $\mu(x+\vartheta_{\text{opt}}[x,d]d)$ in $\spanl\{x,d\}$.
The factor $\vartheta_\text{opt}=\vartheta_\text{opt}[x,d]$ depends on $x$ and $d$.
The minimum $\underline{\mu}$ is now to be estimated from below.

\section{The level set optimization - a reduction to 3D}
\label{s.extconv}

The aim of this section is to show that the poorest convergence of PSD with
respect to the admissible preconditioners and with respect to all vectors $x\in\lmun$
is attained in a three-dimensional $B$-invariant subspace of the $\Rn$.

The representation (\ref{e.1Daff}) of the PSD iteration applies the 
line search to  $d\in\fgax-\mu(x)x$. This may result in an unbounded step length. 
To see this let $d=e_1=(1,0,\ldots,0)^T$ which is an eigenvector of $B$.
If $\gamma$ is close to 1, then $e_1\in\fgax-\mu(x)x$ can be attained since
$\lim_{\gamma\to 1}\fgax=\Rn$. The unboundedness is a consequence of
$\lim_{\vartheta\to\pm\infty}\mu(\mu(x)x+\vartheta e_1)=\mu_1$. 
The potential unboundedness of the step length has already been 
pointed out by Knyazev \cite{k87a}.

Next we want to avoid this singularity.  Therefore let $x'=\vartheta x+d$.
Due to $\mu(x')>\mu(x)$ (which is guaranteed 
by Thm.~\ref{t.pinres}) $\vartheta$ is bounded. So the minimization problem
is reformulated as
\begin{equation}
   \label{e.ritzvalmax}
   \underline{\mu}:=\min_{x\in{\mathcal L}(\mu_0)} 
             \min_{d\in{\mathcal F}_\gamma(x)-\mu_0 x} \mu(\vartheta_\text{opt}[x,d]x+d).
\end{equation}

In the next theorem a necessary condition characterizing this minimum is derived by means of the
Kuhn-Tucker conditions \cite{NOW2006}. 
The application of the Kuhn-Tucker conditions in the context of the convergence analysis
of the fixed-step size preconditioned gradient iteration has been suggested by R.~Argentati,
see \cite{AKNO2010}.

\begin{theorem}
  \label{t.3Djust}
   The minimum (\ref{e.ritzvalmax}) is attained in a three-dimensional 
   $B$-invari\-ant subspace of the $\Rn$. 
   
   If PSD does not terminate in an eigenvector, then the associated Ritz vector $w$ of
   poorest convergence is also contained in 
   the same three-dimensional  $B$-invari\-ant subspace of the $\Rn$, i.e.
   $$   (B+a)w=c(B+b)x$$
   with $a,b,c\in\RR$ and $B+a$ being a regular matrix.
\end{theorem}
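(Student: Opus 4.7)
The plan is to apply the Kuhn--Tucker necessary conditions to the two-level optimization (\ref{e.ritzvalmax}), read off from them the relation $(B+a)w=c(B+b)x$, and then substitute this relation back into the remaining KKT equation to obtain a scalar polynomial identity $\Pi(B)x=0$ with $\deg\Pi\le 3$; the 3D invariance of $x$ then follows by eigen-decomposition of $B$. To set this up, I minimize $f(x,d):=\mu(w)$ over $(x,d)$, where $w=\xi x+\eta d$ is the Ritz vector corresponding to the larger Ritz value of $(B,I)$ in $\spanl\{x,d\}$, subject to $g_1(x):=\mu(x)-\mu_0=0$ and $g_2(x,d):=\|d-v\|^2-\gamma^2\|v\|^2\le 0$ with $v:=(B-\mu_0 I)x$. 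The key auxiliary computation is the gradient of the Ritz value: since $r:=Bw-\mu(w)w$ is orthogonal to $\spanl\{x,d\}$ by the Ritz property, a standard implicit-function argument (the first-order variations of $\xi,\eta$ drop out because $r\perp x$ and $r\perp d$) yields $\nabla_x\mu(w)=(2\xi/\|w\|^2)r$ and $\nabla_d\mu(w)=(2\eta/\|w\|^2)r$.

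The KKT equation $\nabla_d L=0$ immediately gives $d=v-sr$ for a scalar $s=\eta/(\lambda_2\|w\|^2)$. Substituting into $w=\xi x+\eta d$ and collecting powers of $B$ produces
\[
  \bigl[\eta s\,B+(1-\eta s\,\mu(w))I\bigr]\,w=\bigl[\eta B+(\xi-\eta\mu_0)I\bigr]\,x,
\]
i.e., $(B+a)w=c(B+b)x$ with $a=1/(\eta s)-\mu(w)$, $b=\xi/\eta-\mu_0$, $c=1/s$. This is the identity claimed in the theorem, and under the hypothesis that $B+a$ is regular it expresses $w=c(B+a)^{-1}(B+b)x$ as a rational function of $B$ applied to $x$. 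The remaining KKT equation $\nabla_x L=0$, after the substitution $v-d=sr$, reads $P(B)(B-\mu(w)I)w+Q(B)(B-\mu_0 I)x=0$ with $P$ and $Q$ linear polynomials in $B$ built from the multipliers $\lambda_1,\lambda_2$ and the scalars $\xi,\eta,s$.

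Substituting $w=c(B+a)^{-1}(B+b)x$ into this vector identity and clearing the linear denominator $(B+a)$ I obtain
\[
  \Pi(B)\,x=0,\qquad \Pi(B):=c\,P(B)(B-\mu(w)I)(B+b)+Q(B)(B-\mu_0 I)(B+a),
\]
a polynomial in $B$ of degree at most $3$. Expanding $x=\sum_j\xi_j e_j$ in the eigenbasis of $B$ turns this into $\Pi(\mu_j)\xi_j=0$ for every $j$; as $\Pi$ is a cubic it has at most three distinct roots, so $x$ has nonzero components on at most three eigendirections and therefore lies in a three-dimensional $B$-invariant subspace. The formulas $d=(B-\mu_0 I)x-s(B-\mu(w)I)w$ and $w=c(B+a)^{-1}(B+b)x$ then place $d$ and $w$ in the same subspace, completing both assertions.

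The main technical hurdle is the degree bookkeeping in the third step together with the handling of degenerate cases: $\lambda_2=0$ (ball constraint inactive at the minimum), $\eta=0$ or $s=0$ (the search direction becomes tangential to the level set), and $B+a$ singular. In the first two cases either the optimum is attained with $d$ an eigenvector --- in which case PSD terminates in an eigenvector and there is nothing to prove --- or the problem collapses to a lower-dimensional sub-problem in which the 3D conclusion is automatic; the singular case for $B+a$ is explicitly excluded by the statement. One should also verify the constraint qualification for KKT, which holds generically because $\nabla g_1$ and $\nabla_{(x,d)}g_2$ are linearly independent away from trivial configurations.
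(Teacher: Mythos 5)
Your approach coincides with the paper's: KKT conditions for the two-level problem, the gradient of the larger Ritz value computed via orthogonality of the Ritz residual to $\spanl\{x,d\}$ (so the variations of the Ritz coefficients drop out), the $d$-stationarity equation yielding $(B+a)w=c(B+b)x$, and substitution into the $x$-equation to obtain a cubic identity $\Pi(B)x=0$, whence $x$ has at most three nonzero eigencomponents. The first assertion is therefore in order, with one caveat: you should multiply the $x$-equation by $(B+a)$ and then insert $(B+a)w=c(B+b)x$, as the paper does, rather than substituting $w=c(B+a)^{-1}(B+b)x$ and ``clearing the denominator,'' so that the cubic identity does not presuppose regularity of $B+a$.

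The genuine gap is in the second assertion. You write that ``the singular case for $B+a$ is explicitly excluded by the statement,'' but regularity of $B+a$ is part of the \emph{conclusion}; the only hypothesis is that PSD does not terminate in an eigenvector. If $a=-\mu_s$ for some index $s$ outside the three indices carrying $x$, the relation $(B+a)w=c(B+b)x$ permits $w$ to have a fourth nonzero component along $e_s$, so $w$ need not lie in the same three-dimensional invariant subspace, and your argument for $w$ becomes circular. The paper closes exactly this case by a separate computation: writing the $x$-equation as $p_1(B)w=p_2(B)x$ with $\deg p_1=1$, the $s$-th component forces $p_1(\mu_s)=p_1(-a)=0$; combining the resulting expression for $a$ with $a=\alpha(w,w)-\mu(w)$ gives $a=b$, hence $w=\alpha(w,w)x+Ce_s$ with $x\perp e_s$, so $x$ and $e_s$ are themselves the Ritz vectors and PSD terminates in the eigenvector $e_s$ --- contradicting the hypothesis. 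You need this step (or an equivalent one) to legitimately conclude that $B+a$ is regular in the non-terminating case. Your explicit treatment of the inactive-constraint case ($\lambda_2=0$ forces the Ritz residual to vanish, i.e.\ termination in an eigenvector) is correct and is in fact more careful than the paper on that point.
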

\begin{proof}
The minimization problem (\ref{e.ritzvalmax}) reads as follows:
\begin{align*}
& \text{Minimize }\\
& \qquad \mu(\vartheta_\text{opt}x+d) \\
& \text{with respect to $x,d\in\Rn$ satisfying the two constraints:}\\
& \text{1. The cone inequality constraint $d\in{\mathcal F}_\gamma(x)-\mu_0 x$}\\
& \qquad g(x,d)=\|Bx-(\mu_0 x+d)\|^2-\gamma^2\|Bx-\mu_0 x\|^2\\
&\phantom{g(x,d)\qquad} =(1-\gamma^2)\|Bx-\mu_0 x\|^2-2(Bx-\mu_0 x,d)+\|d\|^2\leq 0.\\
& \text{2. The level set constraint $x\in{\mathcal L}(\mu_0)$}\\
& \qquad h(x,d)= (x,Bx)-\mu_0 (x,x)=0.
\end{align*}
Therein $\vartheta_\text{opt}=\vartheta_\text{opt}[x,d]\in\RR$ is a functional depending on 
$x$ and $d$ which maximizes the Rayleigh quotient in the two-dimensional subspace
$\spanl\{x,d\}$. Equivalently $w:=\vartheta_\text{opt}x+d$ is a Ritz vector corresponding
to the larger Ritz value in just this two-dimensional subspace. The first constraint 
guarantees that $d$ is an admissible search direction, i.e.~the distance of $\mu_0x+d$ to the
center $Bx$ of the ball $\bgax$ is bounded by its radius $\gamma\|Bx-\mu_0 x\|$.

The Karush-Kuhn-Tucker stationarity condition for a local minimizer
 $(x^*,d^*)$ reads
$$  \nabla_{(x,d)}\mu(\vartheta_\text{opt}x^*+d^*)+\alpha\nabla_{(x,d)}g(x^*,d^*)
                +\beta\nabla_{(x,d)}h(x^*,d^*)=0$$
with the multipliers $\alpha$ and $\beta$.		
In order to simplify the notation, the asterisks are omitted from now on.

Next we derive the gradients of these functions $\mu$, $g$ and $h$ with respect to
$x$ and $d$. The chain rule gives (for column vectors)
$$  \nabla_x \Big(\mu(\vartheta_\text{opt}x+d)\Big)= \left(D_x(\vartheta_\text{opt}x+d)\right)^T
   (\nabla\mu)(\vartheta_\text{opt}x+d).$$
It holds that
$$   \left( D_x(\vartheta_\text{opt}x+d) \right)_{ij}  = (x(\nabla_x \vartheta_\text{opt})^T
   +\vartheta_\text{opt} I)_{ij}.$$
With $w:=\vartheta_\text{opt}x+d$ we get
\begin{align*}
    \nabla_x\Big(\mu(\vartheta_\text{opt}x+d)\Big)&= \vartheta_\text{opt} (\nabla \mu)(w)+
     (\nabla_x \vartheta_\text{opt}) \;   (x, (\nabla \mu)(w))\\
      &=\vartheta_\text{opt} (\nabla \mu)(w)
      =\vartheta_\text{opt}\frac2{(w,w)}(Bw-\mu(w)w).
\end{align*}
Therein, $(x, (\nabla \mu)(w))=0$ has been used which holds since
$(\nabla \mu)(w)$ is collinear to the residual of the Ritz vector and further, 
by definition of a Ritz vector, its residual is orthogonal 
to the approximating subspace $\spanl\{x,d\}$.   For the $d$-gradient it holds that
$$  \nabla_d\Big(\mu(\vartheta_\text{opt}x+d)\Big)=(\nabla \mu)(w)
        =\frac2{(w,w)}(Bw-\mu(w)w).$$
The gradients of the constraining functions $g$ and $h$ with $r=Bx-\mu_0 x$
are 
\begin{alignat*}{2}
    \nabla_x g(x,d)& = (1-\gamma^2)2(B-\mu_0)r-2(B-\mu_0)d,\qquad& \nabla_x h(x,d)&= 2r,\\
    \nabla_d g(x,d)&=-2(B-\mu_0) x+2d =2(d-r),&   \nabla_d h(x,d)&= 0.
\end{alignat*}

Hence the $x$-components of the Karush-Kuhn-Tucker stationarity condition are
\begin{align}
   \label{e.xgradkkt}
   \frac{\vartheta_\text{opt}}{(w,w)}(B-\mu(w))w+\alpha \Big\{ (1-\gamma^2)(B-\mu_0)^2x-
  (B-\mu_0)(w-\vartheta_\text{opt} x)\Big\} + \beta r=0
\end{align}
and the $d$-components read $(Bw-\mu(w)w)+\alpha (w,w)(d-r)=0.$
The equation for the $d$-components can be reformulated as
\begin{align}
  \label{e.bawx}
  (B+a)w=\alpha (w,w)(B+b)x
\end{align}
with $a=\alpha (w,w)-\mu(w)$ and $b=\vartheta_\text{opt}-\mu_0$. 
Multiplication of (\ref{e.xgradkkt}) with $B+a$ and insertion of (\ref{e.bawx})
results in
\begin{align*}
  &\alpha \Big\{(1-\gamma^2)(B-\mu_0)^2(B+a)x-(B-\mu_0)\left[\alpha (w,w)(B+b)x
    -\vartheta_\text{opt}(B+a)x\right]\Big\}\\
   & +\alpha\vartheta_\text{opt}(B-\mu(w))(B+b)x + \beta (B+a)(B-\mu_0)x=0.
\end{align*}
This can be expressed as
\begin{equation}
   \label{e.p3b}
    p_3(B)x=0
\end{equation}
with a third order polynomial $p_3$. Due to the basis assumptions $B$ is a diagonal matrix and
so $p_3(B)$ is diagonal. As $p_3$ has at most three different zeros, (\ref{e.p3b}) can 
only hold if $x$ has at most three non-zero components, which proves the first assertion.

Hence $x\in\spanl\{e_j,e_k,e_l\}$ for proper indexes $j$, $k$ and $l$.
For that $x$ Eq.~(\ref{e.bawx}) shows that $w$ has not more than four non-zero components;
four non-zero components are only possible if $a=-\mu_s$ for $s\neq j,k,l$. Then 
(\ref{e.xgradkkt}) can be written as $p_1(B)w=p_2(B)x\in\spanl\{e_j,e_k,e_l\}$ with
a first order polynomial $p_1$ and a second order polynomial $p_2$. The latter equation
implies that $p_1(\mu_s)=p_1(-a)=0$. The $s$-th component of the polynomial identity
results in $a=(\alpha\mu_0(w,w)-\mu(w)\vartheta_\text{opt})/(\vartheta_\text{opt}-\alpha(w,w))$.
Together with the known form $a=\alpha (w,w)-\mu(w)$ we get by direct computation that $a=b$.
Insertion of this result to (\ref{e.bawx}) shows that 
$w=\alpha(w,w)x+Ce_s$ for a real constant $C$. Then $x\perp e_s$ and $x$ and $e_s$ are
the Ritz vectors. PSD terminates in 
$e_s$ and $w$ with not more than three non-zero components is the normal case.
\end{proof}

\section{The cone optimization - a mini-dimensional geometric analysis}
\label{s.mini}

Next the convergence behavior with respect to the cone $\fgax$ is analyzed. 
Some of the following arguments are valid in the $\Rn$; however we need 
these properties only for $n=3$.

The (half) opening angle $\varphi$ of the cone $\mathcal{F}_\gamma(x)$ is 
given by $\sin\varphi=\gamma$, since $\gamma$ is the ratio of the radius 
$\gamma \|Bx-\mu(x)x \|$ of
the ball $\bgax$, see (\ref{e.bgamma}), and its (maximal) radius 
$\|Bx-\mu(x)x \|$ for $\gamma\to 1$.
With $\cos\varphi=\sqrt{1-\gamma^2}$ the cone $\mathcal{F}_\gamma(x)$
can be written as 
$$  \mathcal{F}_\gamma(x):=\mu(x)x+\{ z\in\RR^n;\; 
    \Big| (\frac{z}{\|z\|},\frac{Bx-\mu(x)x }{\|Bx-\mu(x)x \|})\Big|\geq
       \sqrt{1-\gamma^2} \}.$$

\subsection{Restriction to non-negative vectors}
\label{ss.restrict}

The analysis of PSD can be restricted to
component-wise non-negative vectors $x\in\Rn$. The justification is as follows. 
Consider the
Householder reflections $H_i=I-2e_ie_i^T$ for which
$x\mapsto H_i x$ changes the sign of the $i$th component of $x$. 
The Rayleigh quotient is invariant under $H_i$, i.e.~$\mu(x)=\mu(H_i x)$. 
If $v$ is an admissible search direction, i.e.~$v\in\fgax-\mu(x)x$, then 
\begin{align*}
   \cos\measuredangle(v,Bx-\mu(x)x)&=(\frac{v}{\|v||},\frac{Bx-\mu(x)x}{\|Bx-\mu(x)x\|})
    = (\frac{H_iv}{\|H_iv||},\frac{BH_ix-\mu(H_ix)H_ix}{\|BH_ix-\mu(H_ix)H_ix\|})\\
    &= \cos\measuredangle(H_iv,BH_i x-\mu(H_ix)H_i x),
\end{align*}
which means that $H_iv$ encloses the same angle with the residual vector
associated with $H_ix$. As for all $\alpha\in\RR$ 
$$ \mu(\mu(H_ix)H_ix+\alpha H_iv)=\mu(H_i(\mu(x)x+\alpha v))=\mu(\mu(x)x+\alpha v)$$
any Rayleigh quotient in the cone $\mathcal{F}_\gamma(x)$ can be
reproduced in the cone $\mathcal{F}_\gamma(H_i x)$ and vice versa.
Thus the analysis can be restricted to $x\geq 0$.

\begin{figure}[t]
\begin{minipage}[c]{0.47\textwidth}
  \begin{center}
   \includegraphics[width=0.95\textwidth]{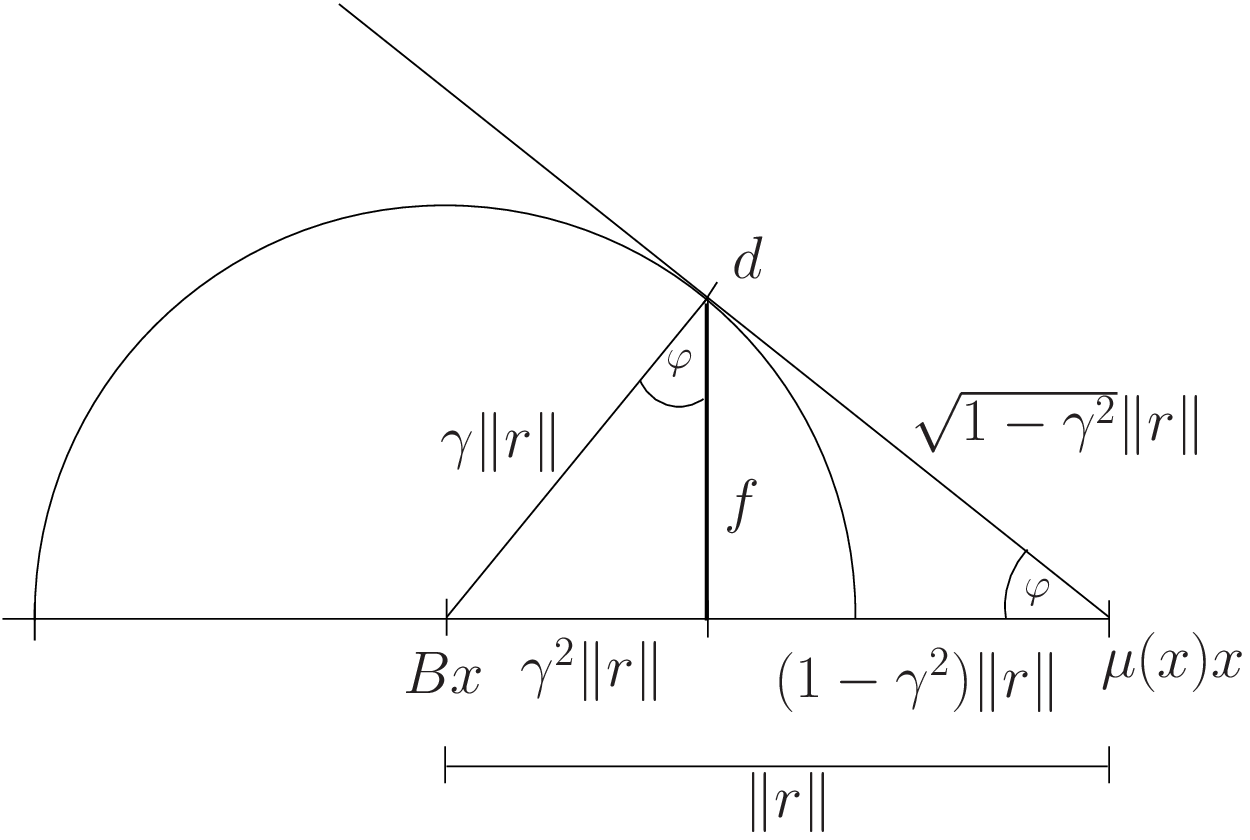}
  \end{center}
  \caption{3D-geometry with $r=Bx-\mu(x)x$, $f=\gamma \sqrt{1-\gamma^2}\,\|r\|$.}
  \label{f.3lg}
\end{minipage}\;
\begin{minipage}[c]{0.47\textwidth}
  \begin{center}
   \includegraphics[width=0.55\textwidth]{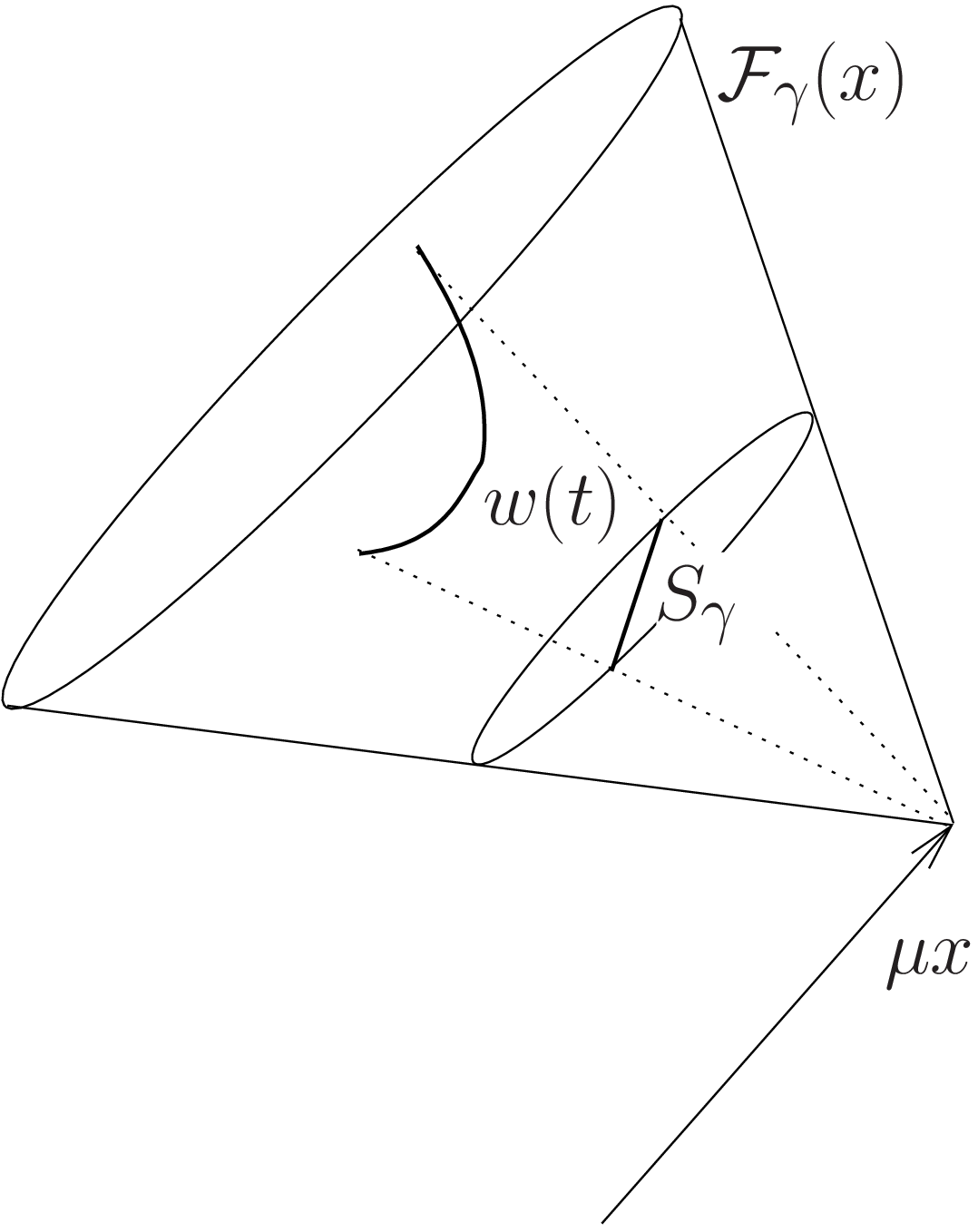}
  \end{center}
  \caption{The line segment $S_\gamma(x)$ and the curve $w(t)$.}
  \label{f.spperp}
\end{minipage}
\end{figure}

\subsection{The poorest convergence in the three-dimensional cone $\fgax$}
       
Any circular cross section $S_\gamma^c$ (with non-zero radius) of 
$\fgax$ can serve to represent the admissible search directions, 
see Fig.~\ref{f.2kappa}. Next we work with the disc
\begin{equation}
  \label{e.fgamma0}
    S_\gamma^c(x):= \mu(x)x+
      (1-\gamma^2)r+\{fy;\; y\in\RR^3, \;
       \|y\|\leq 1,\; y\perp r\}
\end{equation}
with $r:=Bx-\mu(x)x$. Its radius $f$, see Fig.~\ref{f.3lg}, is given by
\begin{align}
  \label{e.f}
  f=\gamma\sqrt{1-\gamma^2} \|r\|.
\end{align}
Further we use only search directions $d\in S_\gamma^c(x)-\mu(x)x$ which 
are orthogonalized against $x$; this is justified since 
the Rayleigh-Ritz approximations (and so the PSD iterate $x'$) only 
depend on the subspace. 
So the set of relevant search directions forms a line segment.
By using the vector $v=x\times r/\| x\times r\|=x\times r/(\|x\|\,\| r\|)$
one can construct the intersection of this line segment 
with the surface of the cone. The points of intersection are
$d_{1/2}$ with
\begin{align}
  \label{e.d1}
   d_1&=\mu(x)x+(1-\gamma^2)r + \gamma \sqrt{1-\gamma^2}\,\|r\|v,\\
  \label{e.d2}
   d_2&=\mu(x)x+(1-\gamma^2)r - \gamma \sqrt{1-\gamma^2}\,\|r\|v, \\ 
   v &=\frac{x\times r}{\|x\|\,\|r\|} . \notag
\end{align}
Therefore the line segment has the form  (see Fig.~\ref{f.spperp})
\begin{align}
  \label{e.lseg}
  S_\gamma(x):=\{d(t):=t d_1+(1-t) d_2; \; t\in[0,1]\}.
\end{align}

\begin{lemma}
The poorest convergence of PSD in 3D (aside from the singular cases 
that PSD terminates in an eigenvector) is attained in $d_1$ or $d_2$
as given by (\ref{e.d1}) and (\ref{e.d2}).

\end{lemma}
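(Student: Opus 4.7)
The plan is to parametrize the chord by $t \in [0,1]$ and study the scalar function $\phi(t) := \mu(w(t))$, where $w(t) = \vartheta_\mathrm{opt}(t)\, x + d(t)$ is the Ritz vector associated with the larger Ritz value on $V(t) := \spanl\{x, d(t)\}$. Since $d(t) \perp x$ and $d(t)$ is affine in $t$, the Ritz orthogonality conditions $Bw - \phi w \perp x$ and $Bw - \phi w \perp d(t)$ combine to give the closed form
\[
(\phi(t) - \mu(x))\bigl(\phi(t)\,(d(t), d(t)) - (d(t), Bd(t))\bigr) = \|x\|^{-2}(Bd(t), x)^2.
\]
A key simplification is that the right-hand side is constant in $t$: using $r \perp x$ and the construction $v = x\times r/(\|x\|\|r\|)$ we have $(Br,x) = \|r\|^2$ and $(Bv,x) = (v, Bx) = (v,r) = 0$, so $(Bd(t),x) = (1-\gamma^2)\|r\|^2$. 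Set $P(t) := (d(t),d(t))$ and $Q(t) := (d(t), Bd(t))$, both quadratic in $t$.

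The goal is to show $\min_{t \in [0,1]} \phi(t) = \min\{\phi(0), \phi(1)\}$, which places the poorest convergence at one of the cone-boundary directions $d_1 = d(1)$ or $d_2 = d(0)$. I plan to establish this by proving that $\phi$ has no local minimum in the open interval $(0,1)$. Implicit differentiation of the characteristic equation yields
\[
\phi'(t)\bigl[\phi(t) P(t) - Q(t)\bigr] + (\phi(t) - \mu(x))\bigl[\phi'(t) P(t) + \phi(t) P'(t) - Q'(t)\bigr] = 0.
\]
At a critical point $\phi'(t_0) = 0$, and since $\phi(t_0) > \mu(x)$ unless PSD terminates in $x$ itself (an excluded singular case), this collapses to the single linear equation $\phi(t_0) P'(t_0) = Q'(t_0)$. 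Combined with the characteristic quadratic this pins down $t_0$, so the interior critical point is unique when it exists. A second differentiation and evaluation at $t_0$ should then yield $\phi''(t_0) \le 0$, forcing the critical point to be a local maximum; hence $\phi$ has no interior local minimum, so its minimum on $[0,1]$ is attained at an endpoint.

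I expect the main obstacle to be the sign analysis of $\phi''(t_0)$, since the expression involves the quadratics $P, Q$ together with their derivatives. The computation is made tractable by the three-dimensional reduction of Theorem~\ref{t.3Djust} (so $B$ is a $3\times 3$ diagonal matrix), the restriction to $x \geq 0$ from Section~\ref{ss.restrict}, and the explicit parametrization $d(t) = (1-\gamma^2)r + \gamma\sqrt{1-\gamma^2}\|r\|(2t-1)v$ with $r \perp v$ and $r,v \perp x$, which decouples the linear and constant contributions to $P, Q$. Geometrically the conclusion is natural: $d_1$ and $d_2$ are exactly the two points where the chord $S_\gamma(x)$ meets the boundary surface of the cone $\fgax$, corresponding to extremal admissible preconditioners, and one expects the worst-case preconditioner to lie on the boundary of the admissible set rather than in its interior.
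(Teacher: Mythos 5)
Your setup (the characteristic equation for the larger Ritz value on $\spanl\{x,d(t)\}$, the observation that $(Bd(t),x)=(1-\gamma^2)\|r\|^2$ is constant in $t$, and the collapse of the implicit derivative at a critical point to $\phi(t_0)P'(t_0)=Q'(t_0)$) is correct, but the proof has a genuine gap at exactly the point you flag as the ``main obstacle'': the inequality $\phi''(t_0)\le 0$ is asserted, not established, and it is the entire content of the lemma. Nothing in what you have written forces an interior critical point to be a local maximum; the claim that the two equations ``pin down $t_0$'' so that the critical point is unique is also not justified (two polynomial relations in $(t_0,\phi(t_0))$ do not automatically have a unique solution). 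Worse, the statement you are trying to prove is delicate: if $w(t_0)$ happens to be an eigenvector for an intermediate eigenvalue, $\phi$ can in principle have an interior local minimum there, so a blanket second-derivative sign argument cannot succeed without somewhere invoking the exclusion of the singular ``termination in an eigenvector'' cases. As it stands the argument is a plan, not a proof.

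The paper's route avoids the second-derivative computation entirely and is worth adopting. Since $w(t)$ is a Ritz vector, its residual $Bw(t)-\mu(w(t))w(t)$, which is collinear to $\nabla\mu(w(t))$, is orthogonal to $\spanl\{x,d(t)\}$. Differentiating $\mu(w(t))$ and using this orthogonality leaves only the term $(\nabla\mu(w(t)),\vartheta_{\mathrm{opt}}(t)\,d'(t))$, so an interior critical point gives $\nabla\mu(w)\perp d'(t)$ as well. But $d'(t)$ is collinear to $x\times r$, and $x$, $d(t)$, $d'(t)$ span $\RR^3$, hence $\nabla\mu(w)=0$ and $w$ is an eigenvector --- precisely the excluded singular case. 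Thus in the non-singular case $\phi$ has \emph{no} interior critical points at all, and the minimum over $[0,1]$ is attained at $t=0$ or $t=1$, i.e.\ at $d_2$ or $d_1$. If you want to keep your algebraic framework, you must either carry out the $\phi''$ computation in full (including a justification that the degenerate eigenvector configurations are exactly the excluded ones) or replace the second-order argument by this first-order orthogonality argument.
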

\begin{proof}
The line segment $S_\gamma$ has the form $d(t)$ with $t\in[0,1]$
by (\ref{e.lseg}). The PSD iteration
maps $S_\gamma$ into a curve $w(t)$, $t\in[0,1]$, where $w(t)$ is the 
Ritz vector $w(t)=\mu(x)x+\vartheta_\text{opt}(t)d(t)$ corresponding to 
the larger Ritz value in $\spanl\{x,d(t)\}$. 
(A singularity like that mentioned at the beginning of Sec.~\ref{s.extconv} 
has not to be considered since otherwise the first alternative $\mu(x')\geq\mu_i$
in Thm.~\ref{t.psdmu} applies and nothing is to be proved.)
Along $w(t)$ we are looking for a vector $w^*=w(t^*)$ so that 
\begin{align*}
   \mu(w(t^*))\leq \mu(w(t)) \quad\forall t\in [0,1].
\end{align*}
Since $w(t)$ is a Ritz vector its residual $Bw(t)-\mu(w(t))w(t)$ is orthogonal 
to the subspace spanned by $x$ and $d(t)$. As the residual is collinear
to the gradient vector $\nabla\mu(w(t))$ we get
\begin{align}
  \label{e.pccond1}
  (\nabla\mu(w(t)),\spanl\{x,d(t)\})=0.
\end{align}
A stationary point of the Rayleigh quotient in a $t\in(0,1)$ is attained if
\begin{align*}
   0&=\frac{d}{dt}\mu(w(t))=(\nabla\mu(w(t)),w'(t))\\
    &=( \nabla\mu(w(t)),\vartheta_\text{opt}'(t)d(t)
          + \vartheta_\text{opt}(t)d'(t))\\
    &= ( \nabla\mu(w(t)),\vartheta_\text{opt}(t)d'(t))
\end{align*}
where (\ref{e.pccond1}) has been used for the last identity. As 
$d'(t)$ is collinear to $x\times r$ we get from 
$(\nabla\mu(w(t)),d'(t))=0$ together with (\ref{e.pccond1}) that $\nabla\mu(w)=0$
(since $x$, $d$ and $d'$ span the $\RR^3$).
So any interior stationary point must be an eigenvector
and hence $\mu(w(t))$ take  the other extrema on the surface for $t=0$ or $t=1$
in $d_1$ or $d_2$. 
\end{proof}
\smallskip

Next we apply the Rayleigh-Ritz procedure to the two-dimensional subspaces
$[x,d_i-\mu(x)x]$, $i=1,2$, in order to determine whether  
the poorest convergence is attained in $d_1$ or $d_2$. 
First the Euclidean norm of $d_i-\mu(x)x$ is determined
\begin{align*}
  \|d_i-\mu(x)x\|^2 =& (1-\gamma^2)^2(r,r)\pm(1-\gamma^2)\gamma \sqrt{1-\gamma^2}
    (r,x\times r)/\|x\|\\
     &+\gamma^2(1-\gamma^2)\|x\times r\|^2/\|x\|^2\\
     =& (1-\gamma^2)^2\|r\|^2+\gamma^2(1-\gamma^2)\|r\|^2
     =(1-\gamma^2)\|r\|^2.
\end{align*}
Hence the normalized search directions $(d_i-\mu(x)x)/\| d_i-\mu(x)x \|$ are
\begin{align*}
  \bar{d}_{1/2}:=\frac{d_{1/2}-\mu(x)x}{\sqrt{1-\gamma^2}\|r\|}
  =\sqrt{1-\gamma^2}\,\frac{r}{\|r\|} \pm\gamma\, \frac{x\times r}{\|x\|\,\|r\|}
\end{align*}
and therefore $V_1=[x,\bar{d}_1]$ and $V_2=[x,\bar{d}_2]\in\RR^{3\times 2}$  
are orthonormal matrices.
The Ritz values of $B$ in the column space of $V_i$ are the eigenvalues of the
projection
\begin{align*}
  B_i:=V_i^TBV_i=\left(\begin{array}{cc}\mu(x) & (\bar{d}_i,Bx)\\ 
   (\bar{d}_i,Bx) & \mu(\bar{d}_i)\end{array}
          \right).
\end{align*}
The larger Ritz value (that is the larger eigenvalue of $B_i$) reads 
\begin{align}
  \label{e.th2i}
  \theta_{2,i}=
    \frac{\mu(x)+\mu(\bar{d}_i)}{2}+\sqrt{ \frac{(\mu(x)-\mu(\bar{d}_i))^2}{4}+
   (\bar{d}_i,Bx)^2}.
\end{align}   
In order to decide whether in $d_1$ or in $d_2$ poorest convergence is taken, we
show that the non-diagonal elements of $B_i$ do not depend on $i$ since
\begin{align}
   \label{e.diBx}
   (\bar{d}_i,Bx)=(\bar{d}_i,Bx-\mu(x)x)=\|r\|(\bar{d}_i,\frac{r}{\|r\|})
        =\|r\|\cos\measuredangle(\bar{d}_i,r)=\sqrt{1-\gamma^2}\|r\|.
\end{align}
Hence only the (2,2) element of $B_i$ depends on $i$.  As further
\begin{align*}
   \frac{d \theta_{2,i}}{d\mu(\bar{d}_i)}=\frac12\Bigg(
   1-\frac1{1+\left(\frac{2 (\bar{d}_i,Bx)}{\mu(x)-\mu(\bar{d}_i)  }\right)^{1/2}}\Bigg)>0
\end{align*}
shows that $\theta_{2,i}$ is a monotone increasing function of $\mu(\bar{d}_i)$ we
still have to find the $d_i$ with the smaller Rayleigh quotient in order to find
the search direction which is associated with the poorer PSD convergence.

\begin{lemma}
\label{l.pcot}
PSD in 3D takes its poorest convergence, i.e.~the smallest value of $\theta_2$,
in
\begin{align}
   \label{e.swd}
   d=\mu(x)x+(1-\gamma^2)r+\gamma\,\sqrt{1-\gamma^2} \frac{x\times r}{\|x\|},
\end{align}
if $x\in\Rn$ is a component-wise non-negative vector (cf.~Sec.~\ref{ss.restrict}).
The associated Ritz value is accessible from (\ref{e.th2i}).
\end{lemma}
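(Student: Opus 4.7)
\smallskip

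The plan is to carry out the decision between $d_1$ and $d_2$ that was left open by the preceding lemma. That lemma localised the poorest-convergence search direction to the two endpoints of the segment $S_\gamma(x)$, and the discussion right after (\ref{e.th2i})--(\ref{e.diBx}) reduced the comparison of the larger Ritz values $\theta_{2,1}$ and $\theta_{2,2}$ to the comparison of the $(2,2)$-entries of the $2\times 2$ matrices $B_i$, because the off-diagonal entry $(\bar d_i,Bx)=\sqrt{1-\gamma^2}\|r\|$ does not depend on $i$ and $\theta_{2,i}$ is strictly increasing in $\mu(\bar d_i)$. Thus the lemma is proved once we show that $\mu(\bar d_1)\le \mu(\bar d_2)$ whenever $x\ge 0$ componentwise.

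First, using $\bar d_{1/2}=\sqrt{1-\gamma^2}\,r/\|r\|\pm\gamma\,(x\times r)/(\|x\|\,\|r\|)$ and the fact that the two vectors $r/\|r\|$ and $(x\times r)/(\|x\|\,\|r\|)$ are orthonormal in $\mathbb{R}^3$ (recall $r=Bx-\mu(x)x$), I would expand $\mu(\bar d_i)=(\bar d_i,B\bar d_i)$ (since $A=I$ and $\|\bar d_i\|=1$) and observe that the symmetric terms in $\gamma$ drop, leaving
\begin{equation*}
   \mu(\bar d_1)-\mu(\bar d_2)=\frac{4\gamma\sqrt{1-\gamma^2}}{\|x\|\,\|r\|^2}\,\bigl(r,\,B(x\times r)\bigr).
\end{equation*}
So everything reduces to determining the sign of $\bigl(r,\,B(x\times r)\bigr)$ for non-negative $x$ in the three-dimensional $B$-invariant subspace.

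Next I would carry out this sign analysis in the diagonal coordinates guaranteed by Lem.~\ref{l.basis}. Denoting the three eigenvalues of $B$ in the invariant subspace by $\beta_1>\beta_2>\beta_3>0$, a direct component-wise calculation of the cross product and of $r_k=(\beta_k-\mu(x))x_k$ gives
\begin{equation*}
   \bigl(r,\,B(x\times r)\bigr)=x_1x_2x_3\,\Bigl[\beta_1(\beta_1-\mu)(\beta_3-\beta_2)+\beta_2(\beta_2-\mu)(\beta_1-\beta_3)+\beta_3(\beta_3-\mu)(\beta_2-\beta_1)\Bigr].
\end{equation*}
The $\mu$-terms cancel, and the remaining totally antisymmetric cubic in $\beta_1,\beta_2,\beta_3$ is proportional to the Vandermonde determinant; evaluation at any convenient point fixes the constant, yielding
\begin{equation*}
   \bigl(r,\,B(x\times r)\bigr)=-\,x_1x_2x_3\,(\beta_1-\beta_2)(\beta_2-\beta_3)(\beta_1-\beta_3).
\end{equation*}

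Finally, because the three eigenvalues are strictly ordered (by the simple-eigenvalue reduction in Lem.~\ref{l.basis}) and because the reduction of Sec.~\ref{ss.restrict} lets us assume $x\ge 0$ componentwise, the right-hand side is $\le 0$. Hence $\mu(\bar d_1)\le\mu(\bar d_2)$, so $\theta_{2,1}\le\theta_{2,2}$, and the poorest convergence in $\mathcal{F}_\gamma(x)\cap\mathbb{R}^3$ is attained at $d_1$, which is precisely the direction (\ref{e.swd}). The main obstacle is the algebraic sign analysis of $(r,B(x\times r))$; the clean factorisation via the Vandermonde-type identity is what makes the argument go through, and it also explains why the sign flips if any component of $x$ is allowed to be negative (which is why the reduction to non-negative $x$ in Sec.~\ref{ss.restrict} is indispensable).
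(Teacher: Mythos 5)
Your proposal is correct and follows essentially the same route as the paper: both reduce the choice between $d_1$ and $d_2$, via the monotonicity of $\theta_{2,i}$ in $\mu(\bar d_i)$ and the $i$-independence of the off-diagonal entry, to the sign of $(r,B(x\times r))$, and both evaluate that quantity explicitly as $-x_1x_2x_3(\mu_1-\mu_2)(\mu_1-\mu_3)(\mu_2-\mu_3)\le 0$ for componentwise non-negative $x$. The only cosmetic difference is that you expand $\mu(\bar d_1)-\mu(\bar d_2)$ directly and cancel the $\mu$-terms via a Vandermonde factorisation, whereas the paper first simplifies $(r,B(x\times r))$ to $(Bx,B^2x\times x)$ using the orthogonality relations before computing componentwise.
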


\begin{proof}
We show that $\theta_{2,1}$ is the smaller Ritz value by showing (we use the
monotonicity of $ \theta_{2,i}[\mu(\bar{d}_i)]$) that
$ \mu(\bar{d}_1)\leq \mu(\bar{d}_2)$. This inequality is true if
$(r,B(x\times r))\leq 0$.
By using $\mathrm{span}\{x,r\}\perp x\times r$ and $r\perp x$ direct computation results in
\begin{align*}
  (r,B(x\times r))&= (B(Bx-\mu(x)x),x\times r)=(B^2x,x\times r)-\mu(x)(Bx,x\times r)\\
  &= (B^2x,x\times r)-\mu(x)(r+\mu(x)x,x\times r)\\
  &=(B^2x,x\times r)=(r,B^2x\times x)=(Bx,B^2x\times x)\\
  &=-x_1x_2x_3(\mu_1-\mu_2)(\mu_1-\mu_3)(\mu_2-\mu_3)\leq 0.
\end{align*}
The last inequality holds since $x\geq 0$ and $\mu_1>\mu_2>\mu_3$. 
\end{proof}

\subsection{A mini-dimensional convergence analysis of PSD}
\label{ss.levset}

Due to Thm.~\ref{t.3Djust} the ``mini-dimensional'' convergence analysis 
can be restricted to three-dimensional $B$-in\-vari\-ant subspaces of the $\Rn$. 
With respect to the basis of eigenvectors these subspaces have the form 
$\spanl\{e_j,e_k,e_l\}$ where $e_*$ is the $*$-th unit vector. The associated
eigenvalues are indexed so that $\mu_j>\mu_k>\mu_l$.

Lemma \ref{l.pcot} delivers for any $x\in\lmu$ in 3D the vector of $\bgax$-poorest 
PSD convergence. Next we have to analyze the $\lmu$-dependence of the poorest
convergence case.

\begin{figure}[t]
\begin{minipage}[c]{0.44\textwidth}
  \begin{center}
   \includegraphics[width=0.9\textwidth]{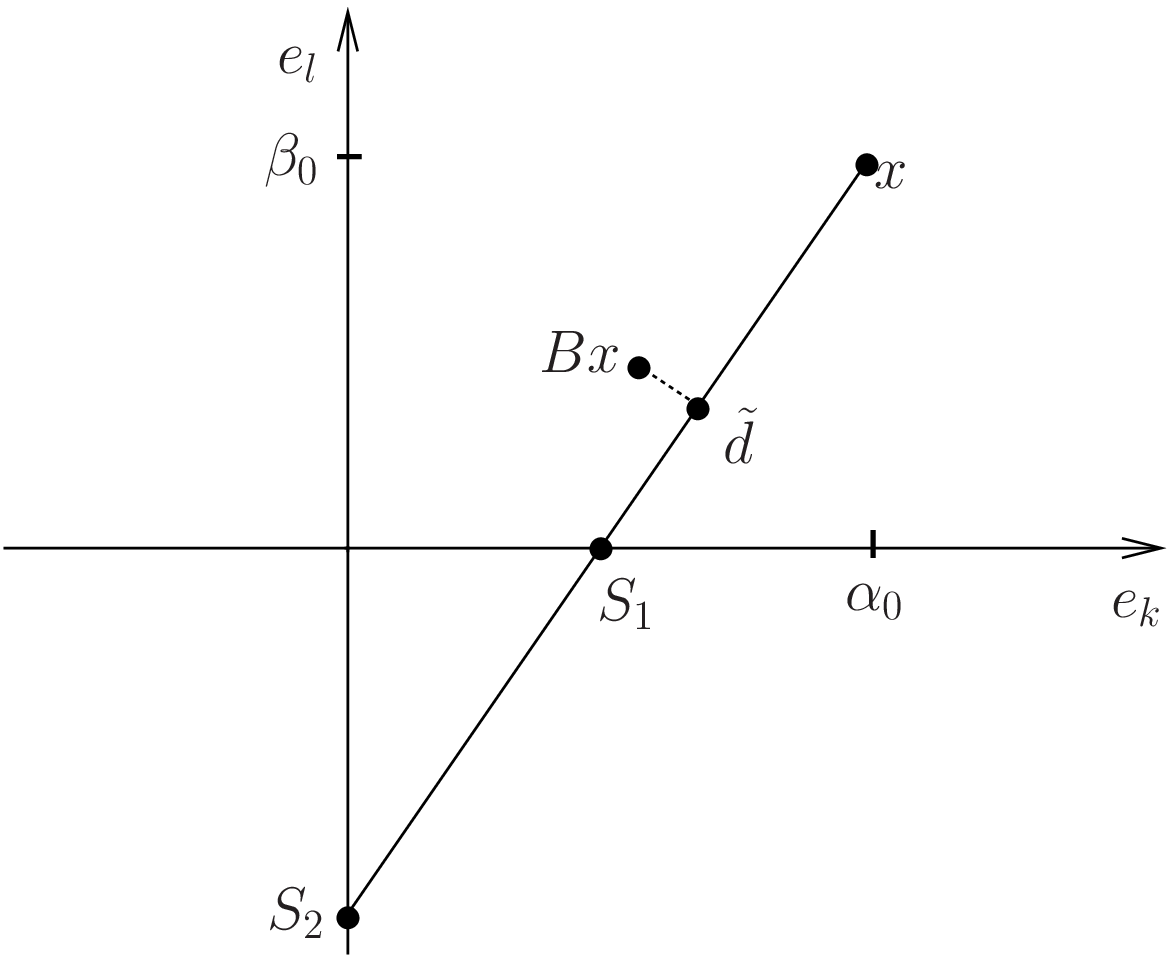}
  \end{center}
  \caption{Geometry in the plane $\mathcal{E}_j$ and $Bx\notin\mathcal{E}_j$.}
  \label{f.planeE}
\end{minipage}\hspace{0.5cm}
\begin{minipage}[c]{0.44\textwidth}
  \begin{center}
   \includegraphics[width=0.9\textwidth]{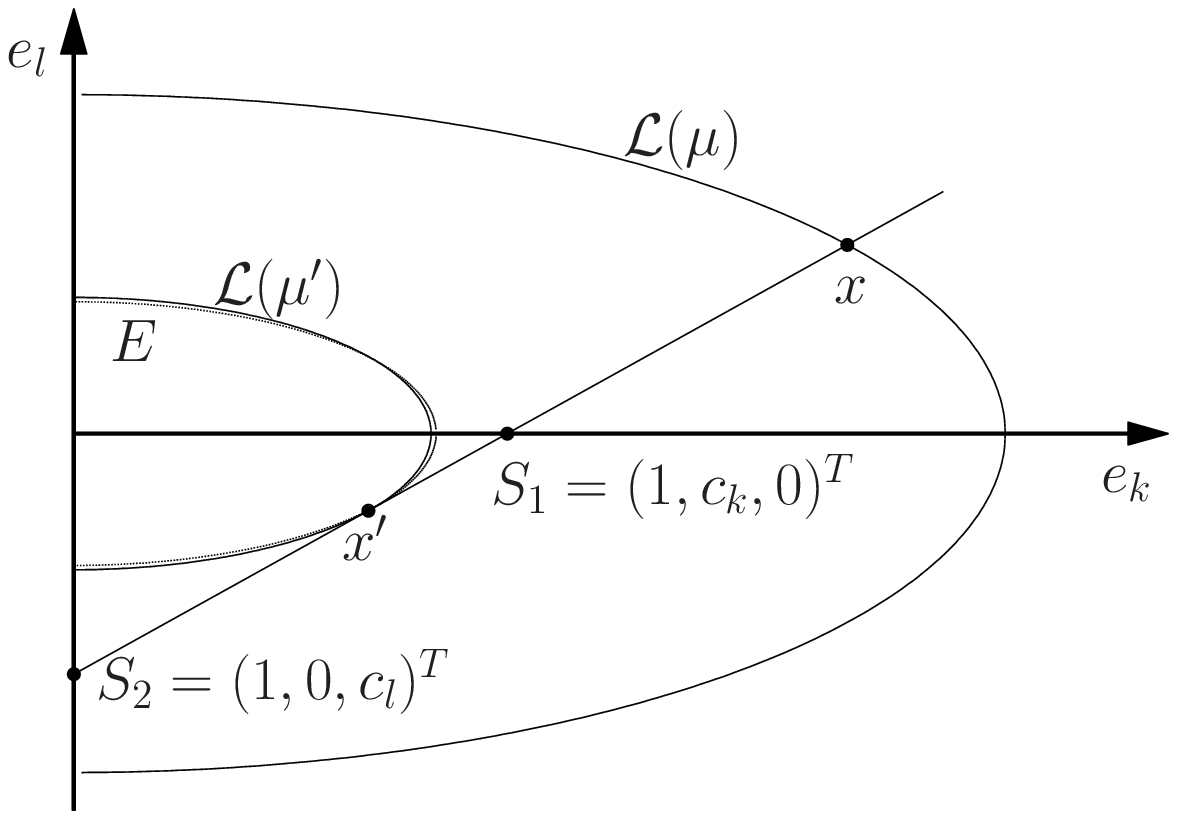}
  \end{center}
  \caption{Ellipses in $\mathcal{E}_j$; $E$ and 
       ${\mathcal L}(\mu')\cap\mathcal{E}_j$ are almost identical}
  \label{f.ellip}
\end{minipage}
\end{figure}

\begin{theorem}
  \label{t.3Dest}
In the three-dimensional space $\spanl\{e_j,e_k,e_l\}$ the following sharp estimate for
PSD holds
$$    \frac{\Delta_{j,k}(\mu')}{\Delta_{j,k}(\mu)}\leq \left(
      \frac {\kappa+\gamma(2-\kappa)}{(2-\kappa)+\gamma\kappa}\right)^2$$
with 
$$ \Delta_{j,k}(\xi)=\frac{\mu_j-\xi}{\xi-\mu_k}
  \quad \text{and} \quad \kappa=\frac{\mu_k-\mu_l}{\mu_j-\mu_l}.$$

\end{theorem}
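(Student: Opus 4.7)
The plan is to combine Lemma \ref{l.pcot} with a level-set optimization inside $\spanl\{e_j,e_k,e_l\}$. By Lemma \ref{l.pcot}, for any component-wise non-negative $x$ in this 3D space the smaller Ritz value $\mu'=\theta_{2}$ produced by PSD under the worst preconditioner is given by the explicit formula (\ref{e.th2i}) with the search direction (\ref{e.swd}). So the remaining work is to maximize $\Delta_{j,k}(\mu')/\Delta_{j,k}(\mu)$ as $x$ varies over $\lmu\cap\spanl\{e_j,e_k,e_l\}$.

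First I would fix $\|x\|=1$ and write out the ingredients of (\ref{e.th2i}) in the diagonal basis. With $r_s=(\mu_s-\mu)x_s$ one has $\|r\|^2=\sum_s(\mu_s-\mu)^2 x_s^2$; the identity (\ref{e.diBx}) gives $(\bar d,Bx)^2=(1-\gamma^2)\|r\|^2$; and a short computation yields $\mu(\bar d)=(1-\gamma^2)\,\mu_r+\gamma^2\mu_v$, where $\mu_r=(r,Br)/\|r\|^2$ and $\mu_v=(v,Bv)$ with $v=(x\times r)/(\|x\|\,\|r\|)$. The two constraints $\|x\|=1$ and $(x,Bx)=\mu$ reduce the level set to a one-parameter curve, so after substitution $\theta_{2}$ becomes an explicit function of a single parameter.

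Geometrically, I would pass to the plane $\mathcal{E}_j$ depicted in Fig.~\ref{f.planeE}, in which $\lmu\cap\spanl\{e_j,e_k,e_l\}$ appears as the ellipse of Fig.~\ref{f.ellip}, and show that along this ellipse the ratio $\Delta_{j,k}(\mu')/\Delta_{j,k}(\mu)$ is monotone, so that its supremum is attained in the limit $\mu\to\mu_j$, i.e.\ $x_j\to 1$ and $(x_k,x_l)\to(0,0)$. In that limit the residual $r$ and the vector $v=(x\times r)/(\|x\|\|r\|)$ lie entirely in the $(e_k,e_l)$-plane, $\|r\|$ becomes first order in the small components $(x_k,x_l)$, and the square-root in (\ref{e.th2i}) admits a clean Taylor expansion. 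The leading-order contribution of $\mu_v=(v,Bv)$ brings in the eigenvalue $\mu_l$ and produces precisely the weighting $\kappa=(\mu_k-\mu_l)/(\mu_j-\mu_l)$, so matching the expansion to the target factor $\sigma=(\kappa+\gamma(2-\kappa))/((2-\kappa)+\gamma\kappa)$ yields both the estimate and its sharpness.

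The main obstacle I anticipate is the monotonicity step: the square root in (\ref{e.th2i}) makes $\theta_{2}$ non-rational in the level-set parameter, and one has to argue that $\Delta_{j,k}(\theta_{2})/\Delta_{j,k}(\mu)$ increases as $\mu\to\mu_j$ rather than having an interior maximum on the ellipse. I would handle this by differentiating the ratio with respect to the parameter and using the identities $(\bar d,Bx)^2=(1-\gamma^2)\|r\|^2$ and $r\perp x$, together with the sign information $(r,B(x\times r))\le 0$ already established in Lemma \ref{l.pcot}, to control the derivative. An alternative route, if the direct calculation is too opaque, is to bound $\theta_{2}$ from below by the larger root of the two-by-two projection (\ref{e.th2i}) after replacing $\mu(\bar d)$ by its infimum over $\mu$, reducing the problem to the one-variable inequality that defines $\sigma$ in terms of $\gamma$ and $\kappa$.
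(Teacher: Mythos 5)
Your starting point is correct: Lemma \ref{l.pcot} pins down the worst search direction (\ref{e.swd}), and what remains is an optimization over the level set inside $\spanl\{e_j,e_k,e_l\}$. But the plan as written has two genuine gaps.

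First, the structure of the remaining optimization is misidentified. Moving along the ellipse $\lmu\cap\mathcal{E}_j$ keeps $\mu$ \emph{fixed}; you cannot reach the limit $\mu\to\mu_j$ by varying $x$ within one level set, so ``monotone along the ellipse, hence supremum at $\mu\to\mu_j$'' conflates two different parameters. There are two independent variables: $\Delta=(\mu_j-\mu)/(\mu-\mu_k)$, which selects the level set, and a parameter $t$ locating $x$ on the positive arc of the ellipse. In the paper's proof the bound is monotone in $\Delta$ (worst case $\Delta\to 0$, i.e.\ $\mu\to\mu_j$), but the subsequent minimization over $t$ has an \emph{interior} critical point $t_1=\sqrt{1-\kappa}(1-\gamma)/\sqrt{1-\gamma^2}$, and it is exactly this interior optimum that produces the factor $\sigma=(\kappa+\gamma(2-\kappa))/((2-\kappa)+\gamma\kappa)$. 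A monotonicity claim along the ellipse is therefore not merely unproven but false, and without the level-set optimization over $t$ you cannot recover the stated constant.

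Second, the computational route through (\ref{e.th2i}) is both incorrect in one detail and missing its key device. The expansion $\mu(\bar d)=(1-\gamma^2)\mu_r+\gamma^2\mu_v$ drops the cross term $2\gamma\sqrt{1-\gamma^2}\,(r,Bv)/\|r\|$, which by the computation in Lemma \ref{l.pcot} equals $-2\gamma\sqrt{1-\gamma^2}\,x_jx_kx_l(\mu_j-\mu_k)(\mu_j-\mu_l)(\mu_k-\mu_l)/(\|x\|\,\|r\|^2)$ and is generically nonzero; it cannot be discarded if the final constant is to be sharp. More importantly, you correctly identify the square root in (\ref{e.th2i}) as the obstacle but offer no concrete way past it (replacing $\mu(\bar d)$ by its infimum would destroy sharpness). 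The paper's proof sidesteps (\ref{e.th2i}) entirely at this stage: it works in the affine plane $\mathcal{E}_j$, computes the intercepts $S_1,S_2$ of the worst tangential plane, and compares the level-set ellipse of $\mu'$ with an auxiliary ellipse $E$ of the same aspect ratio as $\lmu\cap\mathcal{E}_j$ tangent to the line through $S_1$ and $S_2$; the tangency condition yields the closed rational expression (\ref{e.axeq}) for $\tilde{a}^2/a^2\geq\Delta_{j,k}(\mu')/\Delta_{j,k}(\mu)$, which is then amenable to the two monotonicity/optimization steps above. Some substitute for this device is needed before the calculation you sketch can be carried through.
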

\begin{proof}
The starting point of the following analysis are the vectors $x$ and
$$   d=\mu(x)x+(1-\gamma^2)r+\gamma\sqrt{1-\gamma^2} \frac{x\times r}{\|x\|}.$$ 
Without loss of generality $x$ can be normalized in a way that
$$ x=e_j+\alpha_0e_k+\beta_0e_l;$$
hence $x$ is an element of the affine space $\mathcal{E}_j:=e_j+\spanl\{e_k,e_l\}$.
The coordinate  form of $x$ in 3D then is $x=(1,\alpha_0,\beta_0)^T$. 
Further let $\tilde{d}=(1,\tilde\alpha,\tilde\beta)^T\in\mathcal{E}_j$ 
the corresponding multiple of $d$. Since $\spanl\{x,d\}$ is a tangential plane
of the ball $\bgax$ in $d$ and $Bx-d$ is a radius vector of the ball it holds that
\begin{align}
  \label{e.bxperp}
    Bx-d\perp \spanl\{x,d\}=\spanl\{x,\tilde{d}\}.
\end{align}
Hence $Bx-d$ is collinear to
$$    x\times \tilde{d}=(\alpha_0\tilde\beta-\tilde{\alpha}\beta_0,
                          \beta_0-\tilde{\beta},\tilde\alpha-\alpha_0)^T.$$
By $S_1=(1,c_k,0)^T$ and $S_2=(1,0,c_l)^T$ with $S_1,S_2\in \mathcal{E}_j$ 
we denote the points of intersection of $\spanl\{x,\tilde{d}\}$
with $e_j+\spanl\{e_k\}$ and $e_j+\spanl\{e_l\}$, see Fig.~\ref{f.planeE}.
Due to (\ref{e.bxperp}) it holds that $(Bx-d, S_i)=0$, $i=1,2$. Since 
$$   Bx-d=\gamma^2r-\gamma\sqrt{1-\gamma^2}\, \frac{x\times r}{\|x\|}$$
we get with 
\begin{align*}
   r=\left( \begin{array}{c} \mu_j-\mu \\ (\mu_k-\mu)\alpha_0 \\ 
                 (\mu_l-\mu)\beta_0 \end{array} \right),\qquad
   x\times r = \left( \begin{array}{c} 
       \alpha_0\beta_0(\mu_l-\mu_k)\\ \beta_0(\mu_j-\mu_l) \\
       \alpha_0(\mu_k-\mu_j)
    \end{array} \right)
\end{align*}
from $(Bx-d,S_1)=0$ that
\begin{align}
  \label{e.ck2}
   c_k=  -\frac{(Bx-d)\!\mid_1}{(Bx-d)\!\mid_2}=
        \frac{\|x\|(\mu_j-\mu)+\Gamma\alpha_0\beta_0(\mu_k-\mu_l)}
             {\|x\|\alpha_0(\mu-\mu_k)+\Gamma\beta_0(\mu_j-\mu_l)}.
\end{align}
Analogously $(Bx-d,S_2)=0$ results in
\begin{align}
  \label{e.cl2}
    c_l=-\frac{(Bx-d)\!\mid_1}{(Bx-d)\!\mid_3}=
        \frac{\|x\|(\mu_j-\mu)+\Gamma\alpha_0\beta_0(\mu_k-\mu_l)}
             {\|x\|\beta_0(\mu-\mu_l)+\Gamma\alpha_0(\mu_k-\mu_j)}
\end{align}
with $\Gamma=\sqrt{1-\gamma^2}/\gamma$.

Any $x\in\mathcal{E}_j\cap\lmu$ is an element of the ellipse $(x_k/a)^2+(x_l/b)^2=1$
with
$$  a=\sqrt{\frac{\mu_j-\mu}{\mu-\mu_k}}, \qquad b=\sqrt{\frac{\mu_j-\mu}{\mu-\mu_l}}.$$
As justified in Sec.~\ref{ss.restrict} the analysis can be restricted to componentwise non-negative
$x=(1,\alpha_0,\beta_0)^T$ so that its components 
$\alpha_0$ and $\beta_0$ can be represented in terms of
$\psi\in(0,\pi/2)$ and $t=\tan\psi$
\begin{align}
  \label{e.al0be0}
    \alpha_0=a\cos(\psi)=a\sqrt{\frac{1}{1+t^2}}, \qquad
    \beta_0=b\sin(\psi)=b\sqrt{\frac{t^2}{1+t^2}}.
\end{align}
Two further ellipses in $\mathcal{E}_j$ are relevant for the subsequent analysis.
These ellipses are very similar, each centered in $e_j$ 
(the origin of $\mathcal{E}_j$) and each tangential
to the line through $S_1$ and $S_2$. The first ellipse is
$\mathcal{E}_j\cap\mathcal{L}(\mu')$ with $\mu'=\mu(x')$ and has the semi-axes
$$  a'=\sqrt{\frac{\mu_j-\mu'}{\mu'-\mu_k}}, \qquad 
    b'=\sqrt{\frac{\mu_j-\mu'}{\mu'-\mu_l}}.  $$
This ellipse is tangential to the line through $S_1$ and $S_2$ since $\mu(x')$ 
is associated with the poorest convergence on the cone $\fgax$ projected to
$\mathcal{E}_j$. Direct computation shows that
$a'/b'<a/b$. 

The second ellipse $E$, see Fig.~\ref{f.ellip}, has the semi-axes 
$\tilde{a}$ and $\tilde{b}$ so that the ratio of its semi-axes equals that
of  $\mathcal{E}_j\cap\lmu$. This means that $\tilde{a}/\tilde{b}=a/b$. It holds that
$\tilde{a}\geq a'$, since otherwise a contradiction can be derived. Assuming $\tilde{a}< a'$
for any point $(\alpha,\beta)$ on the ellipse $E$ it holds that (by using $a'/b'<a/b$)
$$  \alpha^2+\frac{a'^2}{b'^2}\beta^2<\alpha^2+\frac{a^2}{b^2}\beta^2=
    \alpha^2+\frac{\tilde{a}^2}{\tilde{b}^2}\beta^2=\tilde{a}^2<a'^2$$
so that $\alpha^2/a'^2+\beta^2/b'^2<1$. The latter inequality means that the ellipse $E$
is completely surrounded by the ellipse ${\mathcal L}(\mu')\cap {\mathcal E}_j$, which contradicts
its tangentiality to the line through $S_1$ and $S_2$. Hence 
$$\Delta(\mu')=\frac{\mu_j-\mu'}{\mu'-\mu_k}=a'^2\leq \tilde{a}^2$$
and an upper limit for $\tilde{a}^2/\Delta(\mu)=\tilde{a}^2/a^2$ remains to be determined.
Next we show that (the case $c_l\to\infty$ is to be treated separately by analyzing
the limits of $c_k$ and $c_l$)
\begin{align}
  \label{e.axeq}
  \frac{\tilde{a}^2}{a^2}=\frac{c_k^2c_l^2}{b^2c_k^2+a^2c_l^2}.
\end{align}
To prove this we determine the point of contact of the line through 
$S_1$ and $S_2$ and the ellipse $E$. The semi-axes of $E$ are $\tilde{a}$
and $\tilde{b}=b\tilde{a}/a$. By a rescaling of the second semi-axis with 
the factor $a/b$ the ellipse becomes a circle with the radius $\tilde{a}$ and
the point of contact does not change. Further the line segment 
connecting $S_1$ and $S_2$ is transformed 
$$  s(\sigma)=\left(\begin{array}{c} 0 \\ \frac{a}{b}c_l \end{array}\right)
   + \sigma \left(\begin{array}{c} c_k \\ -\frac{a}{b}c_l \end{array}\right),
   \qquad \sigma\in[0,1].$$
The point of contact is that point on $s(\sigma)$ with the smallest
Euclidean norm. From 
$$   \|s(\sigma)\|^2=\sigma^2 c_k^2+(\frac{a}{b}c_l)^2(\sigma-1)^2$$
direct computation shows that the minimum is attained in
$\sigma^*=a^2c_l^2/(b^2c_k^2+a^2c_l^2).$ The resulting identity
$\tilde{a}^2=\|s(\sigma^*)\|^2$ yields (\ref{e.axeq}).

Insertion of (\ref{e.ck2}), (\ref{e.cl2}) and (\ref{e.al0be0}) in (\ref{e.axeq})
and using the variables $\Gamma:=\sqrt{1-\gamma^2}/\gamma$ $ \in (0,\infty]$,
$\Delta=a^2$, $b^2=\Delta(1-\kappa)/(1+\kappa\Delta)$ 
with 
$$ \kappa=\frac{\mu_k-\mu_l}{\mu_j-\mu_l}$$
results in a representation of $\tilde{a}^2/a^2$ as a 
function of $t$, $\Delta$, $\Gamma$ and $\kappa$. 
(The limit $\Gamma\to\infty$ needs additional care; however this limit corresponds to
$\gamma=0$. For $\gamma=0$ Thm.~\ref{t.psdmu} is already proved in \cite{NOZ2011}.)
The details are as follows.
With
\begin{align*}
  A&=\sqrt{1+\alpha_0^2+\beta_0^2}\,(\mu_j-\mu)+\Gamma\alpha_0\beta_0(\mu_k-\mu_l),\\
  B&=\sqrt{1+\alpha_0^2+\beta_0^2}\,\alpha_0(\mu-\mu_k)+\Gamma\beta_0(\mu_j-\mu_l),\\
  C&=\sqrt{1+\alpha_0^2+\beta_0^2}\,\beta_0(\mu-\mu_l)+\Gamma\alpha_0(\mu_k-\mu_j)
\end{align*}
it holds that $c_k=A/B$ and $c_l=A/C$. Instead of considering $\tilde{a}^2/a^2$
it is more convenient to estimate its reciprocal from below.
From (\ref{e.axeq}) one gets 
$$ \frac{a^2}{\tilde{a}^2}=\frac{\Delta(1-\kappa)}{1+\kappa\Delta}\left(\frac CA\right)^2
   +\Delta \left(\frac BA\right)^2$$
with
\begin{align*}
   \frac CA = \frac{ \sqrt{1+\alpha_0^2+\beta_0^2}\,\beta_0+\Gamma\alpha_0
         \frac{\mu_k-\mu_j}{\mu-\mu_l}}
	 { \sqrt{1+\alpha_0^2+\beta_0^2}\,b^2+\Gamma\alpha_0\beta_0
         \frac{\mu_k-\mu_l}{\mu-\mu_l} },\quad
   \frac BA = \frac{ \sqrt{1+\alpha_0^2+\beta_0^2}\,\alpha_0+\Gamma\beta_0
         \frac{\mu_j-\mu_l}{\mu-\mu_k}}
	 { \sqrt{1+\alpha_0^2+\beta_0^2}\,a^2+\Gamma\alpha_0\beta_0
         \frac{\mu_k-\mu_l}{\mu-\mu_k}}.
\end{align*}
In these formula the ratios of eigenvalue differences are to be expressed in terms
of $\Delta$ and $\kappa$. Therefore let $U:=\mu_j-\mu$, $V:=\mu-\mu_k$ and
$W:=\mu-\mu_l$ so that $\mu_k-\mu_l=W-V$, $\mu_j-\mu_l=U+W$ and $\mu_k-\mu_j=-U-V$.
Since $\Delta=U/V$ and $\Delta(1-\kappa)/(1+\kappa\Delta)=U/W$ we get that
\begin{align*}
   \frac{\mu_k-\mu_j}{\mu-\mu_l}&=-\frac UW(1+\frac VU)=\frac{(\kappa-1)(1+\Delta)}
      {1+\kappa\Delta},\\
   \frac{\mu_k-\mu_l}{\mu-\mu_l}&=1-\frac VU \, \frac UW= \frac{\kappa(1+\Delta)}
       {1+\kappa\Delta},\\
   \frac{\mu_j-\mu_l}{\mu-\mu_k}&=\frac{U+W}V=\frac UV(1+\frac WU)=
          \frac{1+\Delta}{1-\kappa},\\
   \frac{\mu_k-\mu_l}{\mu-\mu_k}&=\frac{W-V}{V}=\frac WU\frac UV-1=
        \frac{\kappa(1+\Delta)}{1-\kappa}.
\end{align*}
Therefore we have
\begin{align*}
  \frac{a^2}{\tilde{a}^2}=&\frac{\Delta(1-\kappa)}{1+\kappa\Delta}
   \left(\frac{\sqrt{1+\alpha_0^2+\beta_0^2}\,\beta_0+\Gamma\alpha_0\frac{(\kappa-1)(1+\Delta)}
      {1+\kappa\Delta}}
     {\sqrt{1+\alpha_0^2+\beta_0^2}\,\frac{\Delta(1-\kappa)}{1+\kappa\Delta}
     +\Gamma\alpha_0\beta_0 \frac{\kappa(1+\Delta)}
       {1+\kappa\Delta}} \right)^2 \\
       &+\Delta
       \left(\frac{\sqrt{1+\alpha_0^2+\beta_0^2}\,\alpha_0+\Gamma\beta_0\frac{(1+\Delta)}
      {1-\kappa}}
     {\sqrt{1+\alpha_0^2+\beta_0^2}\,\Delta
     +\Gamma\alpha_0\beta_0 \frac{\kappa(1+\Delta)}
       {1-\kappa}} \right)^2.
\end{align*}
Insertion of (\ref{e.al0be0}) yields $f:=f(\Delta,t,\kappa,\Gamma)$ with
\begin{align*}
f= \frac{a^2}{\tilde{a}^2}=&\Big(
  (1+\Delta)(\Gamma^2(1-\kappa)^2+\kappa(1-\kappa)+\Gamma^2t^2)
  +(1-\kappa)^2+t^2(1-\kappa) \\
  & +2\kappa\Gamma t\sqrt{1/(1+t^2)}\sqrt{1+t^2+\kappa\Delta}\sqrt{1-\kappa}
     \sqrt{1+\Delta}\Big) / \\
   & \left(\sqrt{1-\kappa}\sqrt{1+t^2+\kappa\Delta}
       +\kappa\Gamma t\sqrt{1/(1+t^2)}\sqrt{1+\Delta}\right)^2.
\end{align*}
This function is monotone increasing in $\Delta$ since $\partial f/\partial\Delta$ equals
\begin{align*}
   \frac{\Gamma^2\sqrt{1-\kappa}\Big( (1-\kappa)^3 +3(1-\kappa)^2t^2+3(1-\kappa)t^4+t^6\Big)}
   {(1+t^2)\sqrt{1+t^2+\kappa\Delta}\Big( 
     \sqrt{1-\kappa}\sqrt{1+t^2+\kappa\Delta}+\kappa\Gamma t\sqrt{1/(1+t^2)}
           \sqrt{1+\Delta}
        \Big)^3}>0.
\end{align*}
Therefore $f(0,t,\kappa,\Gamma)$ is a lower bound for $a^2/\tilde{a}^2$ which reads
\begin{align*}
  f(0,t,\kappa,\Gamma)=\frac{(1+t^2)\Big( 
   \Gamma^2(1-\kappa)^2+(1+t^2)(1-\kappa)+\Gamma^2t^2+2\kappa\Gamma t\sqrt{1-\kappa}  
       \Big)}
     {(\sqrt{1-\kappa}(1+t^2)+\kappa\Gamma t)^2}.
\end{align*}
The parameter $t$ determines the choice of $x$ in the level set $\lmu$. The
derivative with respect to $t$ reads
\begin{align*}
  \frac{\partial }{\partial t}f(0,t,\kappa,\Gamma)=
  \frac{ 2\kappa\Gamma^2(1-\kappa+t^2)\Big(\Gamma t^2+2t\sqrt{1-\kappa}-\Gamma(1-\kappa)\Big)}
       {\Big( \sqrt{1-\kappa}(1+t^2)+\kappa\Gamma t\Big)^3}.
\end{align*}
The two real zeros of this derivative are
$$  t_{1,2}=\frac{\sqrt{1-\kappa}(-1\pm\sqrt{1+\Gamma^2})}{\Gamma}.$$
The global minimum is taken in
$$ 0<t_1=\frac{\sqrt{1-\kappa}(-1+\sqrt{1+\Gamma^2})}{\Gamma}
        =\frac{\sqrt{1-\kappa}(1-\gamma)}{\sqrt{1-\gamma^2}}.$$
Therefore the minimum is given by
\begin{align*}
   f(0,t_1,\kappa,\Gamma)=\left( \frac{(2-\kappa)+\gamma\kappa}
           {\kappa+\gamma(2-\kappa)} \right)^2
\end{align*}        
and its inverse yields the desired convergence estimate
$$  \frac{\Delta(\mu')}{\Delta(\mu)}\leq \left( \frac{\tilde{a}}{a}\right)^2\leq 
   \left(\frac {\kappa+\gamma(2-\kappa)}{(2-\kappa)+\gamma\kappa}\right)^2.$$
This estimate is sharp since for $\Delta=0$ the right inequality turns into an 
identity. Further $\Delta=0$ implies $\mu(x)\to\mu_j$ and also $\mu(x')\to\mu_j$ so that
$\lim_{\mu(x)\to\mu_j}\tilde{a}/\tilde{b}-a'/b'=0$ and in this limit
$\mathcal{L}(\mu')\cap{\mathcal E}_j$ and $E$ coincide; this implies
that the left inequality also turns into an identity. 
\end{proof}

\noindent
\begin{proof}[of Theorem \ref{t.psdmu} and Theorem \ref{t.psd}]
Let $\mu=\mu(x)\in (\mu_{i+1},\mu_i)$. Theorem \ref{t.3Djust} proves that the
poorest convergence is attained in a three-dimensional invariant subspace. 
Theorem \ref{t.3Dest} proves in $\spanl\{e_j,e_k,e_l\}$ that 
$$    \frac{\Delta_{j,k}(\mu')}{\Delta_{j,k}(\mu)}\leq \left(
      \frac {\kappa+\gamma(2-\kappa)}{(2-\kappa)+\gamma\kappa}\right)^2.$$
It either holds that $\mu_l\leq \mu_{i+1}\leq\mu(x)<\mu_i \leq\mu_k<\mu_j$ or
that $\mu_l<\mu_k\leq\mu_{i+1}<\mu(x)<\mu_i\leq\mu_j$. In the first case 
the Ritz value $\mu(x')$ in $\spanl\{e_j,e_k,e_l\}$ satisfies that $\mu_k\leq\mu(x')$,
which is the first alternative in Thm.~\ref{t.psdmu}. To analyze the second case we get that
the convergence factor is a monotone increasing function in $\kappa\in(0,1)$ since
$$ \frac{\partial }{\partial \kappa}\frac {\kappa+\gamma(2-\kappa)}{(2-\kappa)+\gamma\kappa}
   =\frac{2(1-\gamma^2)}{(2-\kappa)+\gamma\kappa}\geq 0.$$
Further $\kappa=(\mu_k-\mu_l)/(\mu_j-\mu_l)$ is a monotone decreasing function 
in $\mu_j$ and $\mu_l$ and a
monotone increasing function in $\mu_k$. Hence the poorest convergence with the maximal
convergence factor is attained in $j=i$, $k=i+1$ and $l=n$ which proves Thm.~\ref{t.psdmu}
$$   \frac{\Delta_{i,i+1}(\mu')}{\Delta_{i,i+1}(\mu)}\leq \left(
      \frac {\kappa+\gamma(2-\kappa)}{(2-\kappa)+\gamma\kappa}\right)^2 \quad
      \text{with } \kappa=\frac{\mu_{i+1}-\mu_n}{\mu_i-\mu_n}.   $$
Thm.~\ref{t.psd} follows by inserting the reciprocals of the eigenvalues and Ritz values.
 \end{proof}

\section*{Conclusions}
The new convergence bound given in Theorem \ref{t.psd} completes the efforts 
to find sharp convergence estimates within the hierarchy of preconditioned PINVIT($k$) and
non-preconditioned INVIT($k$) eigensolvers for the index $k=2$; a hierarchy 
of these solvers has been suggested in \cite{NEY2001H}.
Next the results are summarized. All these convergence estimates have the common
form
$$   \Delta_{i,i+1}(\rho(x')) \leq \sigma^2  \Delta_{i,i+1}(\rho(x)) $$
with $ \Delta_{i,i+1}(\xi)=(\xi-\lambda_i)/(\lambda_{i+1}-\xi)$.

The convergence factor for the non-preconditioned inverse iteration 
 INVIT(1) procedure is  (see \cite{NEY2005})
\begin{align*} 
  \sigma(\text{INVIT(1)})&=\frac{\lambda_i}{\lambda_{i+1}}.\\
\intertext{The associated preconditioned scheme, i.e. the 
  preconditioned inverse iteration PINVIT(1) or
  preconditioned gradient iteration, has the convergence factor  (see \cite{KNN2003})}
  \sigma(\text{PINVIT(1)})&=\gamma+(1-\gamma)\frac{\lambda_i}{\lambda_{i+1}}.
\intertext{Further the convergence factor of the non-preconditioned steepest descent iteration 
INVIT(2) reads (see \cite{NOZ2011})}
 \sigma(\text{INVIT(2)})&=\frac{\kappa}{2-\kappa}\quad \text{ with }
      \kappa= \frac{\lambda_i(\lambda_n-\lambda_{i+1})}{\lambda_{i+1}(\lambda_n-\lambda_i)}.  \\
\intertext{The new result on PINVIT(2), which is the preconditioned steepest descent iteration, is now}
  \sigma(\text{PINVIT(2)})&=\frac{\kappa+\gamma(2-\kappa)}{(2-\kappa)+\gamma\kappa} 
  \quad \text{ with }
      \kappa=\frac{\lambda_i(\lambda_n-\lambda_{i+1})}{\lambda_{i+1}(\lambda_n-\lambda_i)}.
\end{align*}
All these convergence factors are sharp.

Further progress in deriving convergence estimates for the hierarchy of non-pre\-con\-di\-tioned
and preconditioned iteration is a matter of future work. Especially for the practically important
locally optimal preconditioned conjugate gradient (LOPCG) iteration \cite{KNY1998} sharp
convergence estimates are highly desired.

\section{Acknowledgment}
The author is very grateful to Ming Zhou, University of Rostock, for his help with
the introduction of the ellipse $E$ in Section \ref{ss.levset},
which was a valuable input to finalize the convergence proof. 

\bibliographystyle{siam}

\def\cprime{$'$} \def\cprime{$'$}

\end{document}